\theoremstyle{plain}
\newtheorem{corollary}{Corollary}
\newtheorem{definition}{Definition}
\newtheorem{lemma}{Lemma}
\newtheorem{proposition}{Proposition}
\newtheorem{remark}{Remark}
\numberwithin{equation}{section}
\begin{document}
\title[Mollifiers in Clifford Analysis]{Mollifiers in Clifford Analysis}
\author{Dejenie A. Lakew }
\address{Virginia State University\\
Department of Mathematics \& Computer Science}
\email{dlakew@vsu.edu}
\urladdr{http://www.vsu.edu}
\date{January 07, 2008}
\subjclass[2000]{Primary 30G35, 35A35; Secondary 35C15, 35F15}
\keywords{Clifford Analysis, Monogenic/Regular Functions, Mollifiers,
Sobolev Spaces, Smooth Approximations}
\dedicatory{Dedicated to the Memory of my Dear Mother}
\thanks{This paper is in final form and no version of it will be submitted
for publication elsewhere.}

\begin{abstract}
We introduce mollifiers in Clifford analysis setting and construct a
sequence of $C^{\infty }-$functions that approximates a $\gamma -$regular
function and also a solution to a non homogeneous BVP of an in-homogeneous
Dirac like operator in certain Sobolev spaces over bounded domains whose
boundary is not that wild. One can extend the smooth functions upto the
boundary if the domain has a $\ C^{1}-$ boundary and this is the case in the
paper as we consider a domain whose boundary is a $C^{2}-$hyper surface.
\end{abstract}

\maketitle

\section{\textbf{Introduction: Algebraic and Analytic Rudiments}}

Let $\Omega $ be a bounded domain in $%
%TCIMACRO{\U{211d} }%
%BeginExpansion
\mathbb{R}
%EndExpansion
^{n}$ whose boundary is a $C^{2}-$hyper surface and $Cl_{n}$ be a $2^{n}$-
dimensional Clifford algebra generated by $%
%TCIMACRO{\U{211d} }%
%BeginExpansion
\mathbb{R}
%EndExpansion
^{n}$ with an inner product that satisfies $x^{2}=-\parallel x\parallel ^{2}$%
.

Then for $e_{1},e_{2},...,e_{n}$ which are orthonormal basis of $%
%TCIMACRO{\U{211d} }%
%BeginExpansion
\mathbb{R}
%EndExpansion
^{n}$ , we have an equality $e_{ij}+e_{ji}=-\delta _{ij}e_{0}$ , with $%
\delta _{ij}$ , the Kronecker delta symbol and $e_{0}$, the identity element
of the Clifford algebra.

\ \ \ \ 

A $Cl_{n}-$valued function $\ f\ $\ defined in $\Omega $ has a standard
representation :

\begin{equation}
\ f(x)=\sum_{A}e_{A}f_{A}(x),x\in \Omega  \label{formula1}
\end{equation}

where for each index set $A,$ $f_{A}:\Omega \rightarrow 
%TCIMACRO{\U{211d} }%
%BeginExpansion
\mathbb{R}
%EndExpansion
$ is a real valued section of $f$.

\ \ 

Such a function $f$ \ is continuous, differentiable, integrable, measurable,
etc. over $\Omega $, if each section $f_{A}$ \ of $f$\ \ is respectively
continuous, differentiable, integrable, measurable, etc. over $\Omega $.

\bigskip\ \ \ 

Thus the usual function spaces, the H\"{o}lder spaces denoted by $C^{\alpha
}(\Omega ,Cl_{n}),C^{m,\alpha }(\Omega ,Cl_{n})$ and the Sobolev spaces
denoted by $W^{p,k}(\Omega ,Cl_{n})$ for $m,k=0,1,...$ and $1<p<\infty $,
are defined as follows:

\ 

$\ f\in C^{\alpha }(\Omega ,Cl_{n})$ iff $f_{A}\in C^{\alpha }(\Omega ,%
%TCIMACRO{\U{211d} }%
%BeginExpansion
\mathbb{R}
%EndExpansion
)$ where $C^{\alpha }(\Omega ,%
%TCIMACRO{\U{211d} }%
%BeginExpansion
\mathbb{R}
%EndExpansion
)$ is the space of all functions $f$\ which are H\"{o}lder continuous with H%
\"{o}lder exponent $\alpha $ :

\begin{equation*}
\mid f\left( x\right) -f\left( y\right) \mid \leq k_{f}\mid x-y\mid ^{\alpha
}
\end{equation*}%
for $x,y\in \Omega $ with norm given by : 
\begin{equation*}
\Vert f\Vert _{C^{\alpha }\left( \Omega ,%
%TCIMACRO{\U{211d} }%
%BeginExpansion
\mathbb{R}
%EndExpansion
\right) }=\parallel f\parallel _{C\left( \Omega ,%
%TCIMACRO{\U{211d} }%
%BeginExpansion
\mathbb{R}
%EndExpansion
\right) }+\underset{\underset{}{\underset{x\neq y}{x,y\in \Omega }}}{\sup }%
\frac{\mid f\left( x\right) -f\left( y\right) \mid }{\mid x-y\mid ^{\alpha }}
\end{equation*}%
where $k_{f}$ is a positive constant which is specific to the particular
function $f$ .

\ 

For a very trivial fact, the H\"{o}lder exponent $\alpha $ should be in the
interval $(0,1]$, for otherwise, if $\alpha >1,$ we have 
\begin{equation*}
\frac{\mid f\left( x\right) -f\left( y\right) \mid }{\mid x-y\mid }\leq
k_{f}\mid x-y\mid ^{\zeta }
\end{equation*}%
for some $\zeta >0$ and some positive constant $k_{f}$ . Then one can see
that $f$ has a zero derivative at every point $x$ of the domain and
therefore it is a cons$\tan $t . That is, functions whose H\"{o}lder
exponents are greater than one are constant.

\ \ \ 

When $\alpha =1,$ the functions are called Lipschitz functions and these
functions have bounded derivatives over the domain $\Omega $.

\ \ \ 

Also, $\ f\in C^{m,\alpha }(\Omega ,Cl_{n})$ iff $f_{A}\in C^{m,\alpha
}(\Omega ,%
%TCIMACRO{\U{211d} }%
%BeginExpansion
\mathbb{R}
%EndExpansion
)$ where $C^{m,\alpha }(\Omega ,%
%TCIMACRO{\U{211d} }%
%BeginExpansion
\mathbb{R}
%EndExpansion
)$ is the space of functions $f:\Omega \rightarrow 
%TCIMACRO{\U{211d} }%
%BeginExpansion
\mathbb{R}
%EndExpansion
$\ which are $m-$times continuously differentiable and whose $m-$th
derivative is H\"{o}lder continuous with exponent $\alpha $ and with norm
given by 
\begin{eqnarray*}
&\parallel &f\parallel _{C^{m,\alpha }\left( \Omega ,%
%TCIMACRO{\U{211d} }%
%BeginExpansion
\mathbb{R}
%EndExpansion
\right) }=\parallel f\parallel _{C^{m-1}\left( \Omega ,%
%TCIMACRO{\U{211d} }%
%BeginExpansion
\mathbb{R}
%EndExpansion
\right) }+\parallel f^{\left( m\right) }\parallel _{C^{\alpha }\left( \Omega
,%
%TCIMACRO{\U{211d} }%
%BeginExpansion
\mathbb{R}
%EndExpansion
\right) } \\
&=&\parallel f\parallel _{C^{m}\left( \Omega ,%
%TCIMACRO{\U{211d} }%
%BeginExpansion
\mathbb{R}
%EndExpansion
\right) }+\underset{\underset{x\neq y}{x,y\in \Omega }}{\sup }\frac{\mid
f^{\left( m\right) }\left( x\right) -f^{\left( m\right) }\left( y\right)
\mid }{\mid x-y\mid ^{\alpha }}
\end{eqnarray*}

Finally for $p\in \lbrack 1,\infty ),$ Sobolev spaces are defined in a
similar way:

$f\in W^{p,k}(\Omega ,Cl_{n})$ iff $f_{A}\in W^{p,k}(\Omega ,%
%TCIMACRO{\U{211d} }%
%BeginExpansion
\mathbb{R}
%EndExpansion
)$ where $W^{p,k}(\Omega ,%
%TCIMACRO{\U{211d} }%
%BeginExpansion
\mathbb{R}
%EndExpansion
)$ is the space of real valued functions $f$ defined over $\Omega $ which
are locally $p-$ integrable over $\Omega $ and whose $j-th$ distributional (
or weak) derivatives $D^{j}f$\ \ with $\mid j\mid \leq k$ exist and are all $%
p-$integrable over $\Omega $ and norm in such a space is defined as : 
\begin{equation*}
\parallel f\parallel _{W^{p,k}\left( \Omega ,%
%TCIMACRO{\U{211d} }%
%BeginExpansion
\mathbb{R}
%EndExpansion
\right) }=\left( \dsum\limits_{\mid j\mid \leq k}\parallel D^{j}f\parallel
_{L^{p}\left( \Omega ,%
%TCIMACRO{\U{211d} }%
%BeginExpansion
\mathbb{R}
%EndExpansion
\right) }^{p}\right) ^{p^{-1}}
\end{equation*}

Here, a locally integrable function $f:\Omega \rightarrow 
%TCIMACRO{\U{211d} }%
%BeginExpansion
\mathbb{R}
%EndExpansion
$\ is said to have a locally integrable $j-th$ order distributional (or
weak) derivative over $\Omega $ if and only if

\begin{equation*}
\dint\limits_{\Omega }D^{j}f\left( x\right) \phi \left( x\right) d\Omega
_{x}=\left( -1\right) ^{\mid j\mid }\dint\limits_{\Omega }f\left( x\right)
D^{j}\phi \left( x\right)
\end{equation*}

for all test functions $\phi \in C_{c}^{\infty }\left( \Omega \right) $, and
\ \ \ $D^{j}=\dprod\limits_{i=1}^{n}\frac{\partial ^{j_{i}}}{\partial
x_{i}^{j_{i}}}$ \ with $j$ a multi-index exponent such that $%
\dsum\limits_{i=1}^{n}j_{i}=j$.

Note here that $W^{p,0}(\Omega ,Cl_{n})=L^{p}(\Omega ,Cl_{n}),$ the Lebesgue
space of $p-$integrable Clifford valued functions and \ for a detail study
of function spaces, one can refer\ \cite{dr1,dr2,el,sm}

\ \ \ \ \ \ 

For $p=2$, the Lebesgue space $L^{2}(\Omega ,Cl_{n})$ becomes a Hilbert
space with a Clifford-valued inner product given by

\begin{equation}
\langle f,g\rangle _{\Omega }:=\int_{\Omega }\overline{f(x)}g(x)d\Omega
\label{innerproduct}
\end{equation}

\ \ \ \ \ \ 

\ \ \ 

Introduce the in-homogeneous \textbf{Dirac}-operator with gradient potential 
$\gamma $ by:

\begin{equation}
D_{\gamma }:=\sum_{j=1}^{n}e_{j}\left( \frac{\partial }{\partial x_{j}}%
-\gamma _{j}\right)  \label{inhomdirac}
\end{equation}

\bigskip where $\gamma =\sum_{j=1}^{n}e_{j}\frac{\partial }{\partial x_{j}}%
\Gamma $ ( with $\Gamma \in C^{1}(\Omega \rightarrow 
%TCIMACRO{\U{211d} }%
%BeginExpansion
\mathbb{R}
%EndExpansion
)$ linear) is called the \textit{gradient potential} of $\ \Gamma $

\begin{definition}
A function $f\in C^{1}(\Omega \rightarrow Cl_{n})$ is said to be left $%
\gamma $\textbf{-regular }if $D_{\gamma }f(x)=0$, $\forall x\in \Omega $ and
right $\gamma $\textbf{-regular} \ if $f(x)D_{\gamma }=0$.
\end{definition}

An example of a function which is both left and right $\gamma -$regular over 
$\Omega $ is given by

\begin{equation}
\Psi ^{\Gamma }(x):=\frac{\overline{x}}{\omega _{n}\Vert x\Vert ^{n}}%
e^{-\Gamma (x)}  \label{fundsol}
\end{equation}

where $\omega _{n}=\frac{\sqrt{\pi ^{n}}}{\Gamma \left( \frac{n+2}{2}\right) 
}$ is the surface area of the unit sphere in $%
%TCIMACRO{\U{211d} }%
%BeginExpansion
\mathbb{R}
%EndExpansion
^{n}$.

The function given above is also called a \textbf{fundamental solution} (or 
\textbf{Cauchy kernel}) for the in-homogeneous Dirac operator $D_{\gamma }$

\begin{proposition}
Let $\Omega $ be a bounded, $C^{2}-$domain in $%
%TCIMACRO{\U{211d} }%
%BeginExpansion
\mathbb{R}
%EndExpansion
^{n}$ and let $g\in W_{\Gamma }^{2,k-\frac{1}{2}}\left( \partial \Omega
,Cl_{n}\right) $, $k=1,2,...$. Then the
\end{proposition}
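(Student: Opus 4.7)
The plan is to construct the sought solution via Cauchy- and Teodorescu-type integral operators adapted to $D_\gamma$, using the fundamental solution $\Psi^\Gamma$ from equation~(\ref{fundsol}). Since $\Psi^\Gamma$ is both left and right $\gamma$-regular away from the origin, its boundary integral
\[
(\mathcal{C}_\Gamma g)(x) := \int_{\partial \Omega} \Psi^\Gamma(y-x)\, n(y)\, g(y)\, d\sigma_y, \qquad x \in \Omega,
\]
(with $n(y) = \sum_{j=1}^n n_j(y) e_j$ the outer unit normal represented as a vector in $Cl_n$) is my natural candidate solution to the homogeneous BVP $D_\gamma u = 0$ with prescribed trace; for an inhomogeneous right-hand side $h$ I would additionally subtract the volume potential $(\mathcal{T}_\Gamma h)(x) = -\int_\Omega \Psi^\Gamma(y-x)\, h(y)\, dy$, which inverts $D_\gamma$ in the distributional sense.

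First I would verify by differentiating under the integral sign and using the left $\gamma$-regularity of $y \mapsto \Psi^\Gamma(y-x)$ for $y \neq x$ that $D_\gamma(\mathcal{C}_\Gamma g)(x) = 0$ for $x \in \Omega$, so that $\mathcal{C}_\Gamma g$ is $\gamma$-regular in the interior. The trace of $\mathcal{C}_\Gamma g$ on $\partial \Omega$ is then recovered by a Plemelj--Sokhotski style jump formula, whose non-tangential interior boundary limit decomposes as $\tfrac{1}{2}(I + S_\Gamma) g$, where $S_\Gamma$ denotes the principal-value singular integral operator with kernel $\Psi^\Gamma$. Invertibility of $\tfrac{1}{2}(I + S_\Gamma)$, or of a suitable modification, on $W^{2,k-\frac{1}{2}}(\partial \Omega, Cl_n)$ then lets the prescribed datum $g$ be realized as a genuine trace.

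The Sobolev regularity, namely that the constructed $u$ lives in $W^{2,k}(\Omega, Cl_n)$, follows from the mapping properties of $\mathcal{C}_\Gamma$ and $\mathcal{T}_\Gamma$: the Cauchy transform gains $\tfrac{1}{2}$ of a derivative passing from the boundary to the interior, that is $W^{2,k-\frac{1}{2}}(\partial \Omega) \to W^{2,k}(\Omega)$, while the Teodorescu transform gains a full derivative in the Sobolev scale. These are classical-style estimates for Calder\'on--Zygmund operators on a $C^2$-boundary, which apply here once the kernel $\Psi^\Gamma$ is treated as a smooth multiplicative perturbation $e^{-\Gamma}$ of the standard Cauchy kernel $\overline{x}/(\omega_n\|x\|^n)$.

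The main obstacle I anticipate is controlling the lower-order perturbation that $\gamma$ introduces. Although $\Psi^\Gamma$ factorizes cleanly, its derivatives pick up additional terms of the form $(\partial_j \Gamma)\Psi^\Gamma$, and these must be tracked carefully through both the jump relations on $\partial \Omega$ and the a priori $W^{2,k}$ estimates in $\Omega$; the linearity of $\Gamma$ and the $C^2$-smoothness of the boundary are what make this manageable. Uniqueness of the solution, if asserted in the statement, would be obtained from an energy argument built on the Clifford inner product~(\ref{innerproduct}) together with a Cauchy--Pompeiu type representation for $D_\gamma$ derived by applying Stokes' theorem to $\overline{\Psi^\Gamma(\cdot - x)}\,u(\cdot)$ on $\Omega \setminus B_\varepsilon(x)$ and letting $\varepsilon \downarrow 0$.
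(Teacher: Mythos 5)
The paper itself offers no proof of this proposition: it is stated as a known representation result, imported from \cite{d1} and the standard operator calculus for $D_{\gamma }$ (Cauchy- and Teodorescu-type transforms and their Sobolev mapping properties), so there is no argument in the text to compare yours against line by line. That said, your plan is the standard route, and the ingredients you list --- $\gamma $-regularity of $\mathcal{C}_{\Gamma }g$ by differentiation under the integral, the Plemelj--Sokhotski jump relations, the gain of $\tfrac{1}{2}$ a derivative for the boundary potential and of a full derivative for the volume potential, and the treatment of $\Psi ^{\Gamma }$ as the kernel $\overline{x}/(\omega _{n}\Vert x\Vert ^{n})$ times the smooth factor $e^{-\Gamma }$ --- are the correct ones.

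There is, however, one genuine gap, and it sits exactly at the trace condition. Since $S_{\Gamma }^{2}=I$ on the boundary Sobolev scale, the operator $\tfrac{1}{2}(I+S_{\Gamma })$ is a projection (the Hardy-type projection onto boundary values of left $\gamma $-regular functions), and a nontrivial projection is never invertible; so ``invertibility of $\tfrac{1}{2}(I+S_{\Gamma })$'' cannot be the mechanism that realizes $g$ as the trace. Taken literally, the formula $\left( \ref{solu}\right) $ with the datum $g$ inserted directly into the Cauchy integral has interior nontangential limit $\tfrac{1}{2}(I+S_{\Gamma })g$, which equals $g$ only when $g$ already lies in the image of that projection, i.e.\ only when $g$ satisfies the compatibility condition of being the boundary value of a left $\gamma $-regular function. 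To obtain solvability you must either (a) impose and verify that compatibility condition on $g$, or (b) set up the ansatz $f=\mathcal{C}_{\Gamma }\varphi $ with an unknown density $\varphi $ and solve the boundary integral equation $\tfrac{1}{2}(I+S_{\Gamma })\varphi =g$ on the subspace where this is possible, or (c) use a different potential-theoretic construction (an orthoprojection argument in the style of G\"{u}rlebeck--Spr\"{o}ssig). Your sketch elides this point, and without such a restriction the claimed representation does not produce $trf=g$ for arbitrary $g\in W^{2,k-\frac{1}{2}}(\partial \Omega ,Cl_{n})$; the remainder of your regularity and mapping-property discussion is sound.
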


$\ $%
\begin{equation}
\ \mathbf{BVP}:\ \ \ \ \ \left\{ 
\begin{array}{cc}
D_{\gamma }f=0 & \text{on }\Omega \\ 
trf=g & \text{on }\partial \Omega%
\end{array}%
\right.  \label{bvp}
\end{equation}

has a solution $f\in W_{\Gamma }^{2,k}\left( \Omega ,Cl_{n}\right) $ given by

\begin{equation}
f(x)=\int_{\partial \Omega }\Psi _{\Gamma }(x-y)\nu (y)g(y)d\Sigma _{y},\
x\in \Omega  \label{solu}
\end{equation}

\bigskip\ \ \ \ \ 

The theme here is to construct Clifford valued $C^{\infty }-$ function $g$ \
over $\Omega $ \ that approximates the solution function $f$ \ in the $%
H^{k}\left( \Omega ,Cl_{n}\right) $ $\left( \text{or }W^{2,k}\left( \Omega
,Cl_{n}\right) \right) $ sense and also to approximate the solution of a non
homogeneous boundary value problem on Sobolev spaces based at $L^{p}\left(
\Omega ,Cl_{n}\right) $ : 
\begin{equation*}
\mathbf{NHBVP}:\left\{ 
\begin{array}{l}
\begin{array}{cc}
D_{\gamma }f=h & \text{on }\Omega%
\end{array}
\\ 
\begin{array}{cc}
trf=g & \text{on }\partial \Omega%
\end{array}%
\end{array}%
\right.
\end{equation*}%
whose solution is given by : $W^{p,k}\left( \Omega ,Cl_{n}\right) \ni f=\Psi
^{\Gamma }\ast _{\mid \partial \Omega }\nu \left( tr_{\partial \Omega
}f\right) +\Psi ^{\Gamma }\ast _{\mid \Omega }\left( D_{\gamma }f\right) $
and substituting $tr_{\Omega }f=g$ and $D_{\gamma }f=h$ on $\Omega $, where $%
g\in W^{p,k-\frac{1}{p}}\left( \partial \Omega ,Cl_{n}\right) $ and $h\in
W^{p,k-1}\left( \Omega ,Cl_{n}\right) $ where the result is given in \textbf{%
Proposition} 6.

\ \ \ 

This is possible by constructing a smooth function $g$\ over any sub domain $%
\Delta \subset \subset \Omega ,$ where for each $\delta >0$, we have that 
\begin{equation*}
\Vert f-g\Vert _{W_{\Gamma }^{2,k}\left( \Delta ,Cl_{n}\right) }<\delta
\end{equation*}
and taking the supremum of such approximations over all such possible sub
domains as

\begin{equation*}
\underset{\Delta \subset \subset \Omega }{\sup }\Vert f-g\Vert _{W_{\Gamma
}^{2,k}\left( \Delta ,Cl_{n}\right) }
\end{equation*}%
we get the result.

\ \ \ \ 

The smooth functions in general are constructed using mollifiers which sooth
locally or globally integrable functions in certain Sobolev spaces and the
notation $\Delta \subset \subset \Omega $ read as "$\Delta $ is compactly
contained in $\Omega $ " is to mean that $\Delta $ is a subset of $\Omega $
whose compact closure $\overline{\Delta }$ is also contained in $\Omega $.

\ \ \ 

In \cite{d1} , the author constructed a family of functions which are called
minimal to approximate in the best way, such a $\gamma -$regular function
with finitely many of these functions. \ \ For detail results, see the
reference therein.

\ \ \ 

\ \ 

\section{\textbf{Approximations with Smooth Functions}}

As I mentioned above, in \cite{d1} we construct $Cl_{n}$-minimal family of
functions in $B_{\gamma }^{2}(\Omega ,Cl_{n})$ which are used for
approximating solutions of elliptic boundary value problems in the best way.
The construction was made by choosing dense points of some outer surface and
define a family of functions from the fundamental solution $\Psi ^{\Gamma }$
of the in-homogeneous Dirac operator $D_{\gamma }$ with the selected points
as the singular points of the fundamental solution. We then refine these
functions more by an orthogonalization like process. The approximating
functions constructed in this way were in the Sobolev space where the
function to be approximated belongs.

\bigskip\ \ \ \ \ \ \ \ \ \ 

But what we intend to do here is that the same function which is
approximated by minimal family of functions can also be approximated by
smooth functions( in fact $C^{\infty }-$functions ) over the domain $\Omega $%
.

\ \ \ 

We shall mention that the smooth approximation over the domain is always
possible as long as the function is integrable over the domain, and this
approximation is extendable up to the boundary if the boundary of the domain
is a $C^{1}-$ hypersurface. Therefore, when the domain is Lipschitz( minimal
smoothness condition on the boundary), the approximating smooth functions
may not be extendable up to the boundary.

\bigskip\ \ \ 

We therefore start with the notion of a mollifier. As a $Cl_{n}-$valued
function $f$ has a general representation given by $\left( \text{\ref%
{formula1}}\right) $, we start with mollifying a real valued function and
then we extend that definition to that of a Clifford valued function.

\bigskip\ \ \ 

Let $\Omega $ be a bounded domain with a $C^{1}-$boundary, and for $\epsilon 
$ be a positive constant, define a sub domain $\Omega _{\epsilon }$ of $%
\Omega $ by $\Omega _{\epsilon }:=\left\{ x\in \Omega :\text{dist}\left(
x,\partial \Omega \right) >\epsilon \right\} $.

Let us also consider the function 
\begin{equation}
\phi \left( x\right) =\chi _{\overset{0}{B\left( 0,1\right) }}ke^{\left(
\parallel x\parallel ^{2}-1\right) ^{-1}}
\end{equation}

which is a $C^{\infty }-$function over $%
%TCIMACRO{\U{211d} }%
%BeginExpansion
\mathbb{R}
%EndExpansion
^{n}$ whose compact support is within the unit ball $B\left( 0,1\right) $
and we choose the constant $k$ so that the integral of $\phi $ over the
space $%
%TCIMACRO{\U{211d} }%
%BeginExpansion
\mathbb{R}
%EndExpansion
^{n}$ is a unit. The function $\chi _{\overset{0}{B}}$ is the characteristic
function of the interior of the unit ball $B\left( 0,1\right) $.

Then for a function $f:\Omega \rightarrow 
%TCIMACRO{\U{211d} }%
%BeginExpansion
\mathbb{R}
%EndExpansion
$ which is locally integrable, we define the convolution :%
\begin{equation}
f^{\epsilon }\left( x\right) :=\dint\limits_{\Omega }\epsilon ^{-n}\phi
\left( \frac{x-y}{\epsilon }\right) f\left( y\right) d\Omega _{y}
\end{equation}%
which is the convolution of the mollifier function $\phi _{\epsilon }$ with
that of $f$ over the sub domain $\Omega _{\epsilon }$ where $\phi _{\epsilon
}\left( x\right) =\epsilon ^{-n}\phi \left( \epsilon ^{-1}x\right) $ is a $%
C^{\infty }-$function compactly supported in the $\epsilon -$ball centered
at the origin. The above function $f^{\epsilon }$ defined as $f^{\epsilon
}:=\phi _{\epsilon }\ast f$ \ is some times called \ a regularization of $f$.

\begin{lemma}
The convolution function $f^{\epsilon }$ is a $C^{\infty }-$function over
the $\epsilon -$thick skin removed sub domain $\Omega _{\epsilon }$ and
besides $\underset{\epsilon \downarrow 0}{\lim }f^{\epsilon }=f$ in measure.
\end{lemma}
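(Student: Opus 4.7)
The plan is to verify the two assertions separately. For smoothness, fix $x_{0}\in \Omega _{\epsilon }$ and pick a closed neighborhood $V\subset \Omega _{\epsilon }$ of $x_{0}$. For every $x\in V$ the integrand $\phi _{\epsilon }(x-y)f(y)$ is supported in the compact set $K=V+\overline{B(0,\epsilon )}\subset \Omega $, on which $f\in L^{1}(K)$ since $f$ is locally integrable. Each partial derivative $\partial _{x}^{\alpha }\phi _{\epsilon }(x-y)$ is continuous and uniformly bounded for $(x,y)\in V\times K$, so repeated application of the dominated convergence theorem to the difference quotients lets me pull every derivative inside the integral, yielding
\begin{equation*}
\partial ^{\alpha }f^{\epsilon }(x)=\int_{\Omega }\partial _{x}^{\alpha }\phi _{\epsilon }(x-y)f(y)\,d\Omega _{y}
\end{equation*}
together with the continuity of $\partial ^{\alpha }f^{\epsilon }$ on $V$ for every multi-index $\alpha $. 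Since $x_{0}$ was arbitrary, this gives $f^{\epsilon }\in C^{\infty }(\Omega _{\epsilon })$.

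For the convergence, I would first dispatch the case $f\in C(\Omega )$. Using $\int \phi _{\epsilon }=1$ and the support of $\phi _{\epsilon }$, one writes $f^{\epsilon }(x)-f(x)=\int \phi _{\epsilon }(x-y)(f(y)-f(x))\,d\Omega _{y}$ and obtains the pointwise bound $|f^{\epsilon }(x)-f(x)|\leq \sup _{\parallel y-x\parallel \leq \epsilon }|f(y)-f(x)|$, which tends to zero locally uniformly by uniform continuity of $f$ on compacta. For a general locally integrable $f$, fix a compact $K\subset \Omega $ and $\eta >0$, choose a slightly larger compact $K^{\prime }\subset \Omega $ with $K$ in its interior, and select $g\in C_{c}(\Omega )$ with $\parallel f-g\parallel _{L^{1}(K^{\prime })}<\eta $. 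Young's inequality gives $\parallel f^{\epsilon }-g^{\epsilon }\parallel _{L^{1}(K)}\leq \parallel \phi _{\epsilon }\parallel _{L^{1}}\parallel f-g\parallel _{L^{1}(K^{\prime })}<\eta $ for $\epsilon $ smaller than $\mathrm{dist}(K,\partial K^{\prime })$, and combining with the continuous case the triangle inequality
\begin{equation*}
\parallel f^{\epsilon }-f\parallel _{L^{1}(K)}\leq \parallel f^{\epsilon }-g^{\epsilon }\parallel _{L^{1}(K)}+\parallel g^{\epsilon }-g\parallel _{L^{1}(K)}+\parallel g-f\parallel _{L^{1}(K)}
\end{equation*}
can be made arbitrarily small. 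Finally, Chebyshev's inequality $|\{x\in K:|f^{\epsilon }(x)-f(x)|>\alpha \}|\leq \alpha ^{-1}\parallel f^{\epsilon }-f\parallel _{L^{1}(K)}$ upgrades this $L^{1}_{\mathrm{loc}}$ convergence to convergence in measure on any bounded subset of $\Omega $.

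The main obstacle I foresee is purely bookkeeping: choosing the nested compacta $K\subset K^{\prime }\subset \Omega $ and the threshold $\epsilon $ so that every mollification appearing above is well defined and so that the translation used in Young's inequality is legitimate. Once the thin shell between $K$, $K^{\prime }$ and $\Omega _{\epsilon }$ is handled with care, the rest of the argument reduces to the standard combination of differentiation under the integral sign and a density-based three-term triangle estimate.
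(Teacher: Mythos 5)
Your proposal is correct, but note that the paper itself offers no proof of this lemma at all: it is invoked as standard mollifier theory (the kind of statement the author defers to the references, e.g.\ Evans \cite{el}), and the paper immediately builds on it in the Clifford-valued Proposition that follows. So there is no ``paper route'' to compare against; what you have done is supply the omitted argument, and you have done so along the classical lines: differentiation under the integral sign via bounded derivatives of $\phi_{\epsilon}$ and dominated convergence for the $C^{\infty}$ claim, then the three-term density argument (continuous case by uniform continuity, Young's inequality for the $L^{1}$ transfer, Chebyshev to pass to convergence in measure). Both halves are sound, and restricting the support to $K=V+\overline{B(0,\epsilon)}\subset \Omega$ correctly handles the fact that $f$ is only locally integrable. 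One small refinement is worth adding: your Chebyshev step yields convergence in measure on compact subsets $K\subset \Omega$, whereas the lemma asserts convergence in measure on $\Omega$ itself (and a bounded subset of $\Omega$, e.g.\ $\Omega$, need not sit inside such a $K$). Since $\Omega$ is bounded and open, $\mu (\Omega \setminus \Omega _{\delta })\rightarrow 0$ as $\delta \downarrow 0$, so given $\alpha ,\beta >0$ you first choose $K=\overline{\Omega }_{\delta }$ with $\mu (\Omega \setminus K)<\beta /2$ and then apply your estimate on $K$; this one extra line closes the statement exactly as the lemma phrases it.
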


\begin{proposition}
(Clifford Analysis version of a regularization)

For $f=\sum_{A}e_{A}f_{A}:\Omega \rightarrow Cl_{n}$ and $f_{A}^{\epsilon
}:=\phi _{\epsilon }\ast f_{A}$, the regularization $f^{\epsilon
}=\dsum\limits_{A}e_{A}f_{A}^{\epsilon }$ is $C^{\infty }-$over $\Omega
_{\epsilon }$ and further more $\underset{\epsilon \rightarrow 0}{\lim }%
\left( \dsum\limits_{A}e_{A}f_{A}^{\epsilon }\right) =f$
\end{proposition}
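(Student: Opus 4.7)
The plan is to reduce the Clifford-valued statement to its scalar counterpart, which is exactly Lemma 1, by exploiting the standard representation \eqref{formula1} together with the fact that the basis $\{e_A\}$ of $Cl_n$ is finite and consists of algebra constants.

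First I would observe that since $f = \sum_A e_A f_A$ is Clifford valued and locally integrable over $\Omega$, each real-valued section $f_A : \Omega \to \mathbb{R}$ is itself locally integrable, so Lemma 1 applies to each $f_A$ on $\Omega_\epsilon$. This immediately yields $f_A^\epsilon = \phi_\epsilon \ast f_A \in C^\infty(\Omega_\epsilon, \mathbb{R})$ for every index set $A$.

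Next I would verify smoothness of the assembled function. Because the sum $\sum_A e_A f_A^\epsilon$ is finite and the $e_A$ are constants of the algebra, differentiation commutes with the sum, so that for any multi-index $j$
\begin{equation*}
D^j f^\epsilon (x) = \sum_A e_A\, D^j f_A^\epsilon (x), \qquad x \in \Omega_\epsilon,
\end{equation*}
and each summand is continuous on $\Omega_\epsilon$ by the scalar part. Thus $f^\epsilon \in C^\infty(\Omega_\epsilon, Cl_n)$ in the sense specified at the beginning of the paper, where smoothness of a Clifford-valued function is defined sectionwise.

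For the convergence assertion, I would again invoke Lemma 1 componentwise: $f_A^\epsilon \to f_A$ in measure on $\Omega_\epsilon$ as $\epsilon \downarrow 0$, for every $A$. Since the index set is finite, linearity of convergence in measure yields
\begin{equation*}
\sum_A e_A\bigl(f_A^\epsilon - f_A\bigr) \longrightarrow 0 \quad \text{in measure on every } \Omega_\epsilon,
\end{equation*}
and hence $\lim_{\epsilon \to 0} f^\epsilon = f$ in the Clifford sense, using any fixed norm on the finite-dimensional algebra $Cl_n$ (all such norms being equivalent, componentwise convergence in measure is the same as Clifford-valued convergence in measure).

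There is no real obstacle here: the proposition is a direct transfer of the scalar lemma to the algebra-valued setting, and the only point requiring care is the observation that the finiteness of $\{e_A\}$ lets us pass limits and derivatives through the sum without any additional dominated-convergence argument. Essentially, all the analytic content has already been absorbed in Lemma 1, and what remains is the purely algebraic observation that $C^\infty$ and convergence in measure are coordinate-stable under the standard representation.
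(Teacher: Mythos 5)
Your proposal is correct and follows essentially the same route as the paper: apply the scalar Lemma componentwise to each section $f_A$, then use finiteness of the basis $\{e_A\}$ to pass smoothness and convergence through the Clifford sum. If anything, you are more careful than the paper about the mode of convergence (in measure, inherited from the Lemma) where the paper only says ``by continuity.''
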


\begin{proof}
For each index set $A,$ $f_{A}$ is a real valued function from the domain $%
\Omega $ and by the above lemma, the convolution $f_{A}^{\epsilon }=\phi
_{\epsilon }\ast f_{A}$ is a $C^{\infty }-$function over $\Omega _{\epsilon
} $. Then the Clifford sum of such smooth functions : $\dsum\limits_{A}e_{A}%
\left( \phi _{\epsilon }\ast f_{A}\right) =:f^{\epsilon }$ is a smooth
function as well. Also by continuity, $\dsum\limits_{A}e_{A}\left( \phi
_{\epsilon }\ast f_{A}\right) \rightarrow \dsum\limits_{A}e_{A}f_{A}$ as $%
\epsilon \rightarrow 0$, that is $f^{\epsilon }\rightarrow f$ as $\epsilon
\rightarrow 0$.
\end{proof}

The following proposition is the main result of the paper.

\begin{proposition}
Let $\Omega $ be a bounded domain with a $C^{1}-$ boundary and $\ 1<p<\infty
,$ $f\in W^{p,k}(\Omega ,Cl_{n})$ where, $k=0,1,2,...$. Then $\forall
\epsilon >0,$ there exists a $Cl_{n}-$ valued function $\Psi
=\dsum\limits_{A}e_{A}\psi _{A}$ over $\Omega $ which is $C^{\infty }-$ up
to the boundary such that 
\begin{equation*}
\parallel f-\Psi \parallel _{W^{p,k}\left( \Omega \cup \partial \Omega
,Cl_{n}\right) }<\epsilon
\end{equation*}
\end{proposition}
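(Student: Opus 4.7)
The plan is to reduce the claim to the scalar case component-by-component and then invoke the classical density of $C^\infty(\overline{\Omega})$ in the real Sobolev space $W^{p,k}(\Omega,\mathbb{R})$ over domains with $C^1$ boundary. Writing $f=\sum_A e_A f_A$ as in (\ref{formula1}), each section $f_A$ lies in $W^{p,k}(\Omega,\mathbb{R})$ by definition of the Clifford Sobolev space, so it is enough to produce, for every prescribed tolerance, a real-valued $\psi_A\in C^\infty(\overline{\Omega})$ close to $f_A$ in $W^{p,k}(\Omega,\mathbb{R})$ and then assemble $\Psi:=\sum_A e_A \psi_A$.

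First I would construct each $\psi_A$ as follows. Using the $C^1$-regularity of $\partial \Omega$, invoke a bounded extension operator $E\colon W^{p,k}(\Omega,\mathbb{R}) \to W^{p,k}(\mathbb{R}^{n},\mathbb{R})$ (a Calder\'on/Stein type extension). Apply the scalar mollification from the previous lemma to the extended function, producing $(Ef_A)^\eta := \phi_\eta \ast Ef_A \in C^\infty(\mathbb{R}^n)$, and note that standard $L^p$-mollifier theory upgrades the measure-convergence of the preceding lemma to convergence in $W^{p,k}(\mathbb{R}^n,\mathbb{R})$: for any multi-index $j$ with $|j|\le k$, $D^j[(Ef_A)^\eta]=\phi_\eta\ast D^j(Ef_A)$ and Young's inequality gives $\|(Ef_A)^\eta-Ef_A\|_{W^{p,k}(\mathbb{R}^n)}\to 0$ as $\eta\downarrow 0$. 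Restricting to $\overline{\Omega}$ yields $\psi_A := (Ef_A)^\eta\big|_{\overline{\Omega}} \in C^\infty(\overline{\Omega})$, and
\[
\|f_A - \psi_A\|_{W^{p,k}(\Omega,\mathbb{R})} \le \|(Ef_A)^\eta - Ef_A\|_{W^{p,k}(\mathbb{R}^n,\mathbb{R})} \longrightarrow 0.
\]

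Next I would assemble the Clifford-valued approximant $\Psi := \sum_A e_A \psi_A$ and control the total error by the scalar errors. Since the Clifford norm of a sum $\sum_A e_A g_A$ is equivalent to $\bigl(\sum_A \|g_A\|^p\bigr)^{1/p}$ (with constants depending only on $n$), one has
\[
\|f - \Psi\|_{W^{p,k}(\Omega\cup\partial\Omega,Cl_n)} \le C_n \sum_{A}\|f_A - \psi_A\|_{W^{p,k}(\Omega,\mathbb{R})},
\]
so choosing each $\eta=\eta(A)$ small enough that the right-hand side is below $\epsilon$ finishes the argument. The smoothness of $\Psi$ up to $\partial\Omega$ is immediate because each $\psi_A$ is the restriction of a function in $C^\infty(\mathbb{R}^n)$.

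The main obstacle is the boundary step: the naive interior mollification $f_A^{\epsilon}$ only lives on the shrunken set $\Omega_\epsilon$, and its Sobolev norm on a thin collar near $\partial\Omega$ cannot be controlled directly. The $C^1$ hypothesis on $\partial\Omega$ is exactly what is needed to guarantee the extension property and hence smoothness up to the boundary. If one wishes to avoid citing a black-box extension operator, the standard alternative is to localize: cover $\overline{\Omega}$ by finitely many charts, use a smooth partition of unity $\{\chi_j\}$ so that $f_A=\sum_j \chi_j f_A$, flatten the boundary in each boundary chart via a $C^1$-diffeomorphism, translate each localized piece slightly inward so its shifted support sits inside $\Omega_\eta$, and then mollify. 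Either route delivers the required density, and the Clifford version follows componentwise as described.
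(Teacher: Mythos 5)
Your proposal is correct, but it takes a genuinely different route from the paper. The paper follows the interior partition-of-unity argument of Evans: it decomposes $\Omega$ into overlapping shells $\widetilde{\Omega}_i$, writes each real section as $f_A=\sum_i \theta_i f_A$, mollifies each piece with an $i$-dependent radius $\epsilon_i$ so that $\|g_i-\theta_i f_A\|_{W^{p,k}}\leq \beta 2^{-(i+1)}$, and sums the locally finite series $\psi=\sum_i g_i$; the global bound is then obtained by taking a supremum over compactly contained subdomains $\Delta\subset\subset\Omega$. You instead extend $f_A$ to all of $\mathbb{R}^n$ by a Calder\'on--Stein operator (available since a bounded $C^1$ boundary is in particular Lipschitz), mollify globally, and restrict. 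The trade-off is notable: your route uses the $C^1$ hypothesis exactly where it is needed and directly produces $\psi_A\in C^\infty(\overline{\Omega})$, whereas the paper's partition-of-unity construction never invokes the boundary regularity and in fact only yields $\psi\in C^\infty(\Omega)$ --- the supports of the $g_i$ accumulate at $\partial\Omega$, so the locally finite sum need not be smooth up to the boundary, and the paper's passage from $\sup_{\Delta\subset\subset\Omega}$ to the norm on $\Omega\cup\partial\Omega$ controls the Sobolev norm but not boundary smoothness. Your alternative sketch (charts, boundary flattening, inward translation, then mollification) is precisely the standard repair of that gap, so your argument is, if anything, better matched to the statement as claimed. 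One cosmetic remark: since the paper defines the Clifford Sobolev norm componentwise as $\bigl(\sum_A\|f_A-\psi_A\|^p_{W^{p,k}}\bigr)^{1/p}$, you do not even need the norm-equivalence constant $C_n$; choosing each component error below $\epsilon\,2^{-n/p}$ suffices exactly.
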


\begin{proof}
We first start with the Clifford Analysis version of regularization .

For a $Cl_{n}-$valued function $f$ defined on $\Omega $ which is represented
by

$\ f(x)=\sum_{A}e_{A}f_{A}(x)$, we construct locally integrable $C^{\infty
}- $functions from $f$ as%
\begin{equation}
f^{\epsilon }:=\sum_{A}e_{A}f_{A}^{\epsilon }  \label{clifmolli}
\end{equation}%
\ where, for each $A$, 
\begin{equation*}
f_{A}^{\epsilon }=\phi _{\epsilon }\ast f_{A}
\end{equation*}%
From the construction of the mollifiers $\phi _{\epsilon }$, one can show
that the $\epsilon -$wide section $f^{\epsilon }$ of the Clifford valued
function $f$ is $C^{\infty }-$ function over the sub domain $\Omega
_{\epsilon }$ as each component function $f_{A}^{\epsilon }$ is $C^{\infty
}- $ over $\Omega _{\epsilon }$ and 
\begin{equation*}
\underset{\epsilon \downarrow 0}{\lim }\left( \sum_{A}e_{A}\left( f_{A}\ast
\phi _{\epsilon }\right) \right) =f=\sum_{A}e_{A}f_{A}
\end{equation*}%
in measure over $\Omega $.

\ 

The next procedure is to look at how each $%
%TCIMACRO{\U{211d} }%
%BeginExpansion
\mathbb{R}
%EndExpansion
-$valued component function $f_{A}$\ of the $Cl_{n}-$valued function $f$ is
approximated by $C^{\infty }-$functions (for more information on this
particular procedure, one can refer \cite{el}).

\ 

The process is out lined next, where some kind of \textit{surgery} on the
domain $\Omega $ is performed in order to construct smooth functions that
will approximate $f_{A}$ in terms of other smooth functions called
partitions of unity (refer \cite{el} for details) and then we extend the
result to work for a $Cl_{n}-$valued function $f$ .

\ \ 

To explain exactly what is happening is that we cut off each component
function which is in a Sobolev space that may have a singularity of some
order, by $C^{\infty }-$ functions which control the singularity and sooth
the function and then we patch the smooth sections to create the needed $%
C^{\infty }-$\ approximating functions.

\ \ 

Thus, for each $i,\left( i=1,2,..,\right) $ construct sub domain $\Omega
_{i}:=\left\{ x\in \Omega :\text{dist}\left( x,\partial \Omega \right)
>i^{-1}\right\} $ so that $\Omega =\dbigcup\limits_{i=1}^{\infty }\Omega
_{i} $ and also consider the decomposition of the domain in the following
way: $\widetilde{\Omega }_{i}=\Omega _{i+3}-\overline{\Omega }_{i+1}$, and
then pick a sub domain $\widetilde{\Omega }_{0}\subset \subset \Omega $ so
that $\Omega =\dbigcup\limits_{i=0}^{\infty }\widetilde{\Omega }_{i}$. Then
for an $%
%TCIMACRO{\U{211d} }%
%BeginExpansion
\mathbb{R}
%EndExpansion
-$valued component function $f_{A}\in W^{p,k}\left( \Omega \right) $ of the
Clifford valued function $f$ and for any partition of unity $\left\{ \theta
_{i}\right\} _{i=0}^{\infty }$ associated to the open cover $\left\{ 
\widetilde{\Omega }_{i}\right\} _{i=0}^{\infty }$ of $\Omega $, the function 
$\theta _{i}f_{A}$ is compactly supported over $\widetilde{\Omega }_{i}$ and
furthermore, it is \ in the Sobolev space $W^{p,k}\left( \Omega ,%
%TCIMACRO{\U{211d} }%
%BeginExpansion
\mathbb{R}
%EndExpansion
\right) $.

\ \ \ \ 

Let us consider a $\beta >0$ and choose a positive but small number $%
\epsilon _{i}$ such that the convolution function $\phi _{\in _{i}}\ast
\left( \theta _{i}f_{A}\right) =:g_{i}$ has a compact support in $%
V_{i}:=\Omega _{i+4}-\overline{\Omega }_{i}$ \ which contains $\widetilde{%
\Omega }_{i}$ for $i,\left( i=1,2,...\right) $, and that satisfies the
inequality: 
\begin{equation*}
\parallel g_{i}-\theta _{i}f_{A}\parallel _{W^{p,k}\left( \Omega ,%
%TCIMACRO{\U{211d} }%
%BeginExpansion
\mathbb{R}
%EndExpansion
\right) }\leq \frac{\beta }{2^{i+1}}
\end{equation*}%
for $i=0,1,2,...$.

Now let us consider the function $\psi :=\dsum\limits_{i=0}^{\infty }g_{i}$
and we claim that $\psi $ is a $C^{\infty }$-function over $\Omega $.

\ \ 

Indeed, for any open sub domain $\Delta \subset \subset \Omega $, we have $%
\psi _{m}:=\psi _{\shortmid \Delta }=\dsum\limits_{i=0}^{m}g_{i}$ for some $%
m\in 
%TCIMACRO{\U{2115} }%
%BeginExpansion
\mathbb{N}
%EndExpansion
$, since $\Delta \subset \subset \Omega $, we have that $\overline{\Delta }^{%
\text{cpct}}\varsubsetneq \Omega $ so that finitely many of the sets from
the cover $\left\{ V_{i}\right\} _{i}$ of $\Omega $ covers $\Delta $.
Therefore, for any set $\Delta \subset \subset \Omega $ and for a section $%
f_{A}$ of $f$ \ we have the inequality 
\begin{equation*}
\parallel \psi _{\mid \Delta }-\left( f_{A}\right) _{\mid \Delta }\parallel
_{\mid W^{p,k}\left( \Omega ,%
%TCIMACRO{\U{211d} }%
%BeginExpansion
\mathbb{R}
%EndExpansion
\right) }=\parallel \left( \dsum\limits_{i=0}^{\infty
}g_{i}-\dsum\limits_{i=0}^{\infty }\theta _{i}f_{A}\right) _{\mid \Delta
}\parallel _{\mid W^{p,k}\left( \Omega ,%
%TCIMACRO{\U{211d} }%
%BeginExpansion
\mathbb{R}
%EndExpansion
\right) }
\end{equation*}%
\begin{eqnarray*}
&=&\parallel \underset{\text{finite sum as }\Delta \subset \subset \Omega 
\text{ }}{\underbrace{\dsum\limits_{i=0}^{\infty }\left( g_{i}-\left( \theta
_{i}f_{A}\right) \right) }}\parallel _{\mid W^{p,k}\left( \Delta ,%
%TCIMACRO{\U{211d} }%
%BeginExpansion
\mathbb{R}
%EndExpansion
\right) } \\
&\leq &\dsum\limits_{i=0}^{m}\parallel g_{i}-\left( \theta _{i}f_{A}\right)
\parallel _{\mid W^{p,k}\left( \Omega ,%
%TCIMACRO{\U{211d} }%
%BeginExpansion
\mathbb{R}
%EndExpansion
\right) }\leq \dsum\limits_{i=0}^{\infty }\frac{\beta }{2^{i+1}}=\beta
\end{eqnarray*}%
where $f_{A}$ is represented by $f_{A}=\dsum\limits_{i=0}^{\infty }\theta
_{i}f_{A}$.

\ 

Therefore, considering the $\underset{\Delta \subset \Omega }{\text{sup}}%
\parallel \psi -f_{A}\parallel _{\mid W^{p,k}\left( \Delta ,%
%TCIMACRO{\U{211d} }%
%BeginExpansion
\mathbb{R}
%EndExpansion
\right) }$ we have the required result 
\begin{equation*}
\parallel \psi -f_{A}\parallel _{\mid W^{p,k}\left( \Omega \cup \partial
\Omega ,%
%TCIMACRO{\U{211d} }%
%BeginExpansion
\mathbb{R}
%EndExpansion
\right) }\leq \beta
\end{equation*}%
That is, the smooth function $\psi \left( =\underset{m\rightarrow \infty }{%
\lim }\psi _{m}=\underset{m\rightarrow \infty }{\lim }\left( \psi _{\mid
\Delta }\right) \right) $ approximates $f_{A}$ in the Sobolev space $%
W^{p,k}\left( \Omega \cup \partial \Omega ,%
%TCIMACRO{\U{211d} }%
%BeginExpansion
\mathbb{R}
%EndExpansion
\right) $.

\ \ 

Then since each $%
%TCIMACRO{\U{211d} }%
%BeginExpansion
\mathbb{R}
%EndExpansion
-$ valued component function $f_{A}$ of the $Cl_{n}-$valued function $%
f=\sum_{A}e_{A}f_{A}$ is smoothly approximated over $\overline{\Omega }%
(=\Omega \cup \partial \Omega )$ by $\psi _{A}\in C^{\infty }\left( \Omega
\cup \partial \Omega ,%
%TCIMACRO{\U{211d} }%
%BeginExpansion
\mathbb{R}
%EndExpansion
\right) $, we have that $\Psi =$ $\dsum\limits_{A}e_{A}\psi _{A}$
approximates the whole function $f$ over $\Omega \cup \partial \Omega $
which is $\overline{\Omega }.$ That is, we can make $\parallel f-\Psi
\parallel _{W^{p,k}\left( \Omega \cup \partial \Omega ,Cl_{n}\right) }$ as
small as we please.

\ \ 

Therefore, for $\epsilon >0$, and $A$ an index set, from the above argument,
we can make a component-wise $%
%TCIMACRO{\U{211d} }%
%BeginExpansion
\mathbb{R}
%EndExpansion
-$valued\ smooth approximation 
\begin{equation*}
\parallel f_{A}-\psi _{A}\parallel _{W^{p,k}}<\frac{\epsilon ^{p}}{2^{np}}
\end{equation*}%
on $\Omega \cup \partial \Omega $. The factor $2^{-n}$ in the last
inequality is related to the cardinality of a basis of the Clifford algebra $%
Cl_{n}$.

\ 

Then considering the functions $\Psi $ and $f$, with corresponding component
functions with the above corresponding sectional smooth approximations, we
have : 
\begin{equation*}
\parallel f-\Psi \parallel _{W^{p,k}\left( \Omega \cup \partial \Omega
,Cl_{n}\right) }=\parallel \dsum\limits_{A}e_{A}(f_{A}-\psi _{A})\parallel
_{W^{p,k}\left( \Omega \cup \partial \Omega ,Cl_{n}\right) }
\end{equation*}

\begin{equation*}
=\left( \dsum\limits_{A}(\parallel f_{A}-\psi _{A}\parallel _{W^{p,k}\left(
\Omega \cup \partial \Omega ,%
%TCIMACRO{\U{211d} }%
%BeginExpansion
\mathbb{R}
%EndExpansion
\right) }^{p})\right) ^{p^{-1}}<\epsilon
\end{equation*}
\end{proof}

\section{\textbf{Applications}}

In this section, we see the application of the two methods we discussed
above : approximation of a $\gamma -$regular function by minimal family of
functions and approximation of such a function by smooth functions.

\ \ 

The application of the complete and minimal function systems that we
constructed in approximating null solutions of first order partial
differential equations of the in-homogeneous Dirac operator is presented in
the following proposition.

\begin{proposition}
\cite{d1}Let $\Omega $ and $g$ be as in proposition $2$ . Then for a given $%
\varepsilon >0$ and for a given left $\gamma -$regular function $f$ given as
a solution of the BVP$\left( \ref{bvp}\right) $ in proposition $2$ ,\ there
exist Clifford numbers $\beta _{j}(j=1,...n_{0})$ such that 
\begin{equation*}
\Vert f-\sum_{j=1}^{n_{0}}\Psi _{j}^{\Gamma }\beta _{j}\Vert _{W_{\Gamma
,Cl_{n}}^{2,k}}<\varepsilon
\end{equation*}%
on $\Omega $.\ 
\end{proposition}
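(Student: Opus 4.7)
The plan is to construct a countable total system of $\gamma$-regular functions on $\Omega$ by placing the singularities of translated copies of $\Psi^{\Gamma}$ densely on an auxiliary surface exterior to $\Omega$, then extract a minimal sub-family whose finite linear combinations (with right Clifford coefficients) approximate the solution $f$ of the BVP. Concretely, I would enclose $\overline{\Omega}$ in a slightly larger $C^{2}$ domain $\Omega'\supset\supset\Omega$, pick a dense sequence $\{y_{j}\}_{j=1}^{\infty}$ on the $C^{2}$-hypersurface $S:=\partial\Omega'$, and set $\Psi_{j}^{\Gamma}(x):=\Psi^{\Gamma}(x-y_{j})$. Because each $y_{j}\notin\overline{\Omega}$, every $\Psi_{j}^{\Gamma}$ is both left and right $\gamma$-regular on $\Omega$ and, by interior and up-to-the-boundary elliptic regularity for $D_{\gamma}$ (available since $\partial\Omega$ is $C^{2}$), lies in $W_{\Gamma}^{2,k}(\Omega,Cl_{n})$.

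Next I would recast the problem in a Hilbert-module framework. The closed subspace $B_{\gamma}^{2,k}(\Omega,Cl_{n})$ of left $\gamma$-regular elements of $W_{\Gamma}^{2,k}(\Omega,Cl_{n})$ inherits the Clifford-valued inner product \eqref{innerproduct}, and by Proposition 2 the BVP solution $f$ lies in this closed subspace. Applying a Clifford-module analogue of Gram-Schmidt to $\{\Psi_{j}^{\Gamma}\}$, right-multiplying by elements of $Cl_{n}$ and using the scalar part of the inner product where the non-commutativity of $Cl_{n}$ (and, for $n\geq 3$, the absence of a division-algebra structure) obstructs the classical procedure, I would produce a minimal orthonormal-like family $\{\Phi_{j}\}$ with the same closed right-$Cl_{n}$ linear span as $\{\Psi_{j}^{\Gamma}\}$.

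The main obstacle, and the crux of the argument, is the completeness of $\{\Psi_{j}^{\Gamma}\}$ in $B_{\gamma}^{2,k}(\Omega,Cl_{n})$. I would argue by duality: if $h\in B_{\gamma}^{2,k}$ satisfies $\langle\Psi_{j}^{\Gamma},h\rangle_{\Omega}=0$ for every $j$, then the Cauchy-type transform $H(y):=\int_{\Omega}\overline{\Psi^{\Gamma}(x-y)}\,h(x)\,d\Omega_{x}$ is $\gamma$-regular in $y$ on $\mathbb{R}^{n}\setminus\overline{\Omega}$, vanishes on the dense set $\{y_{j}\}\subset S$ and hence on all of $S$ by continuity, and therefore on the whole unbounded component of $\mathbb{R}^{n}\setminus\overline{\Omega}$ by the identity principle for $D_{\gamma}$-regular functions. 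Taking non-tangential limits of $H$ from both sides of $\partial\Omega$ and invoking the Plemelj-Sokhotski jump formula for the $\Psi^{\Gamma}$-Cauchy transform then forces $h\equiv 0$ in $\Omega$.

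With completeness in hand, the conclusion is the standard Fourier-series argument. Bessel's inequality in the Clifford-Hilbert module yields an expansion $f=\sum_{j}\Phi_{j}\,c_{j}$ with $c_{j}=\langle\Phi_{j},f\rangle_{\Omega}$, converging in $W_{\Gamma}^{2,k}(\Omega,Cl_{n})$; choosing $n_{0}$ so that the tail has norm below $\varepsilon$ and re-expressing the resulting partial sum back in terms of the original $\{\Psi_{j}^{\Gamma}\}$ by inverting the Gram-Schmidt change of basis produces Clifford numbers $\beta_{1},\ldots,\beta_{n_{0}}$ with $\|f-\sum_{j=1}^{n_{0}}\Psi_{j}^{\Gamma}\beta_{j}\|_{W_{\Gamma,Cl_{n}}^{2,k}}<\varepsilon$, as required.
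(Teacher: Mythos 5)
Your proposal is correct in spirit, but it proves far more than the paper does, and it routes the last step differently. The paper's proof is essentially one sentence long: it cites the $Cl_{n}$-completeness of the translated fundamental-solution system $\{\Psi_{m}^{\Gamma}\}_{m}$ (established in \cite{d1}) in the space of left $\gamma$-regular functions belonging to $W_{\Gamma}^{2,k}(\Omega,Cl_{n})$, immediately concludes that finitely many of the $\Psi_{j}^{\Gamma}$ suffice, and then pins down the coefficients $\beta_{j}$ by a \emph{collocation} scheme --- solving $tr_{\Sigma}\sum_{j}\Psi_{j}^{\Gamma}\beta_{j}(y_{i})=g(y_{i})$ at a set of unisolvent boundary points $y_{i}$. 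You instead reprove the completeness by the annihilator/duality argument and then obtain the $\beta_{j}$ by Gram--Schmidt orthogonalization in the Clifford--Hilbert module followed by truncation of the resulting Fourier expansion. Both coefficient schemes are legitimate routes to mere \emph{existence} of $\beta_{j}$: yours yields the genuine $W^{2,k}$-best approximant in the span, while the paper's collocation scheme is what is actually implemented in \cite{d1} and ties more directly into the minimal-family machinery of the paper's Corollary 5.

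One technical imprecision in your completeness step that would need to be repaired in a full writeup: the transform $H(y)=\int_{\Omega}\overline{\Psi^{\Gamma}(x-y)}\,h(x)\,d\Omega_{x}$ is a \emph{volume} (Teodorescu-type) integral, not a boundary Cauchy integral, so it is continuous across $\partial\Omega$ and has no Plemelj--Sokhotski jump to invoke. The correct endgame is: $H$ is $\gamma$-regular off $\overline{\Omega}$, vanishes on the dense set $\{y_{j}\}\subset S$, hence on all of $\mathbb{R}^{n}\setminus\overline{\Omega}$; by continuity of the Teodorescu transform its trace from inside $\partial\Omega$ is also zero; on $\Omega$ it solves a first-order equation $D'H=\overline{h}$ for the appropriate adjoint operator, and an integration by parts (Borel--Pompeiu / Green's identity for $D_{\gamma}$) combined with $h$ being left $\gamma$-regular and $trH=0$ then forces $\langle h,h\rangle_{\Omega}=0$. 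You should also make explicit that orthogonality is taken with respect to the real (scalar) part of the $Cl_{n}$-valued inner product, and that passage from $L^{2}$-completeness to $W^{2,k}$-completeness on the subspace of $\gamma$-regular functions uses interior elliptic (Cauchy-type) estimates; as stated your duality argument is really an $L^{2}$ argument. None of these gaps is fatal --- they are exactly the points \cite{d1} addresses --- but they are the points a referee would flag if you were offering this as a self-contained replacement for the citation.
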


\begin{proof}
Since the system $\{\Psi _{m}^{\Gamma }\left( x\right) :=\frac{\overline{%
\left( x-x_{m}\right) }}{\omega _{n}\parallel x-x_{m}\parallel ^{n}}%
e^{-\Gamma \left( x-x_{m}\right) }\}_{m}$ is $Cl_{n}$-complete in the space
of left $\gamma $-regular functions which are in $W_{\Gamma }^{2,k}\left(
\Omega ,Cl_{n}\right) $ , where $\left\{ x_{m}\right\} _{m}$ is a dense
subset of some outer hypersurface $\Sigma _{out}$ of the domain $\Omega $\
such that $dist\left( \Sigma _{out},\partial \Omega \right) \geq \delta >0$
, the solution $f$ of the BVP$\left( \ref{bvp}\right) $ in proposition $2$,
can be approximated with finitely many elements of $\{\Psi _{m}^{\Gamma
}\}_{m}$ . That means, $\exists \beta _{j}\in Cl_{n}$ $(j=1,...,n_{0})$ such
that the above approximation inequality holds. The Clifford numbers $\beta
_{j}(j=1,...n_{0})$ are determined by solving a system of equations obtained
from the boundary conditions 
\begin{equation*}
tr_{\Sigma }\sum_{j=1}^{n_{0}}\Psi _{j}^{\Gamma }\beta _{j}(y_{i})=g(y_{i})\ 
\end{equation*}%
for each $i=1,...,n_{0}$, where $\{y_{i}:i=1,...,n_{0}\}$ is a set of
unisolvent points selected on $\Sigma $ as in proposition $9$.
\end{proof}

Then a best approximation of the above solution can be obtained from the
minimal functions.

\begin{corollary}
\cite{d1}Using the $Cl_{n}-$ minimal functions $\{\phi _{k}\}_{k}$, the
solution $f$ given by equation $\left( \ref{solu}\right) $\ of the \textbf{%
BVP} $\left( \ref{bvp}\right) $\ is approximated in the best way in $%
B_{(n_{0})}=\underset{Cl_{n}}{\text{span}}\left( \{\phi
_{j}\}_{j=1}^{n_{0}}\right) $ as 
\begin{equation*}
\parallel f-\sum_{j=1}^{n_{0}}\phi _{j}\lambda _{j}\parallel _{W_{\Gamma
}^{2,k}}<\varepsilon
\end{equation*}%
\ \ with $\lambda _{j}$ $\left( j=1,...,n_{0}\right) $ determined as in
proposition $11$.
\end{corollary}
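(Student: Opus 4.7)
The plan is to deduce the corollary from Proposition 11 together with the defining property of the minimal family $\{\phi_k\}_k$, namely that it is obtained from the complete family $\{\Psi_m^\Gamma\}_m$ of Proposition 11 by the $Cl_n$-orthonormalization procedure described in \cite{d1}. Because that procedure is triangular in index, the $Cl_n$-spans satisfy $B_{(n_0)} = \mathrm{span}_{Cl_n}\{\phi_j\}_{j=1}^{n_0} = \mathrm{span}_{Cl_n}\{\Psi_j^\Gamma\}_{j=1}^{n_0}$, so any right Clifford combination of one family can be rewritten as a right Clifford combination of the other.

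First I would invoke Proposition 11 to obtain Clifford coefficients $\beta_1,\ldots,\beta_{n_0}$ with $\parallel f-\sum_{j=1}^{n_0}\Psi_j^\Gamma\beta_j\parallel_{W_\Gamma^{2,k}}<\varepsilon$. Applying the triangular Clifford change-of-basis matrix produced in the orthogonalization, this same element of $B_{(n_0)}$ may be rewritten as $\sum_{j=1}^{n_0}\phi_j\lambda_j^\prime$ for suitable $\lambda_j^\prime\in Cl_n$. This already certifies that the distance from $f$ to $B_{(n_0)}$ is smaller than $\varepsilon$.

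To upgrade this from mere approximation to a \emph{best} approximation, I would exploit the $Cl_n$-orthonormality $\langle\phi_i,\phi_j\rangle_\Omega=\delta_{ij}e_0$ that is built into the minimal family. Expanding $\parallel f-\sum_j\phi_j\lambda_j\parallel^2$ through the Clifford inner product (\ref{innerproduct}) and using the right-linearity induced by Clifford multiplication, the cross term vanishes precisely when each $\lambda_j$ equals the Fourier-type coefficient $\langle\phi_j,f\rangle_\Omega$. These unique minimizing coefficients can, in parallel with the previous proposition, equivalently be read off from the boundary interpolation system $tr_\Sigma\sum_{j=1}^{n_0}\phi_j\lambda_j(y_i)=g(y_i)$ at the $n_0$ unisolvent points on $\Sigma$, matching the prescription of Proposition 11.

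The main obstacle I anticipate is handling the non-commutativity of $Cl_n$ in the projection argument so that the Bessel-type cross term actually cancels: one must keep the left/right multiplications in $\langle\cdot,\cdot\rangle_\Omega$ consistent with the placement of the $\lambda_j$ on the right of the $\phi_j$, and one must verify that the weight $e^{-\Gamma}$ implicit in the $W_\Gamma^{2,k}$-norm is absorbed compatibly into the orthonormality relations of $\{\phi_k\}$. Once this algebraic bookkeeping is settled, the conclusion follows by the same Bessel/Parseval-style computation as in the classical real Hilbert setting, and the approximation with $\lambda_j$ chosen as above is at least as good as the one with $\lambda_j^\prime$, hence within $\varepsilon$.
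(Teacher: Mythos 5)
The paper offers no proof of this corollary at all: it is quoted from \cite{d1}, and the text treats it as an immediate consequence of the preceding proposition together with the construction of the minimal family in that reference. Your reconstruction is therefore the natural one, and its core is sound: because the $\phi_{j}$ arise from $\{\Psi_{j}^{\Gamma }\}$ by a triangular orthogonalization-type process, $B_{(n_{0})}$ contains the particular element $\sum_{j=1}^{n_{0}}\Psi_{j}^{\Gamma }\beta_{j}$ furnished by the completeness proposition, so the best approximant to $f$ out of $B_{(n_{0})}$ is automatically within $\varepsilon $; the role of the minimality/orthonormality is only to guarantee that this best approximant exists and is given by explicit Fourier-type coefficients $\lambda_{j}$, which is what the cited proposition $11$ of \cite{d1} supplies.

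Two points in your write-up need repair. First, the norm in the corollary is $W_{\Gamma }^{2,k}$, so the orthonormality $\langle \phi_{i},\phi_{j}\rangle =\delta_{ij}e_{0}$ that you use to kill the cross term must be taken with respect to the Clifford-valued $W_{\Gamma }^{2,k}$ inner product (a right $Cl_{n}$-module structure), not the $L^{2}$ pairing $\left( \text{\ref{innerproduct}}\right) $; with the wrong pairing the Bessel computation does not minimize the relevant norm. Second, your assertion that the minimizing $\lambda_{j}$ can \emph{equivalently} be read off from the boundary collocation system $tr_{\Sigma }\sum_{j}\phi_{j}\lambda_{j}(y_{i})=g(y_{i})$ is unsupported and conflates two different constructions: that interpolation system is how the $\beta_{j}$ of the completeness argument are determined, whereas the best-approximation coefficients are the module-valued Fourier coefficients of $f$; there is no reason these coincide. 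Deleting that sentence and keeping the domination-plus-projection argument leaves a correct and complete deduction.
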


\bigskip\ \ 

The next proposition gives the smooth approximation of a null solution of
the in homogeneous Dirac operator which is in a certain Sobolev space.

\begin{proposition}
Let $\Omega $ and $g$ be as in proposition $2$ . Then for a given $%
\varepsilon >0$ and for a given left $\gamma -$regular function $f$ given in 
$\left( \ref{solu}\right) $ as a solution of the BVP $\left( \ref{bvp}%
\right) $\ in proposition $2$, there exists a $C^{\infty }-$ function $\Psi
=\dsum\limits_{A}e_{A}\psi _{A}$ over $\Omega \cup \partial \Omega $ such
that 
\begin{equation*}
\parallel f-\Psi \parallel _{\mid W_{\Gamma }^{2,k}\left( \Omega \cup
\partial \Omega ,Cl_{n}\right) }<\varepsilon .
\end{equation*}
\end{proposition}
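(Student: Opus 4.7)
The plan is to deduce this proposition as a direct specialization of the main smooth-approximation result, Proposition 4, applied to the particular solution $f$ produced by Proposition 2. The hypotheses of the two propositions line up: Proposition 2 guarantees $f \in W_{\Gamma}^{2,k}(\Omega, Cl_n)$, and the domain $\Omega$ has $C^{2}$ boundary (so in particular $C^{1}$), which is exactly the smoothness required in Proposition 4 for the approximating smooth functions to extend up to $\partial \Omega$.

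First I would record that the solution function $f$ supplied by equation (\ref{solu}) of Proposition 2 is $\gamma$-regular on $\Omega$ and belongs to the weighted Sobolev space $W_{\Gamma}^{2,k}(\Omega, Cl_n)$. The $\gamma$-regularity is not actually needed for the approximation step; all that matters is membership in a Sobolev space. Second, I would bridge the weighted norm $W_{\Gamma}^{2,k}$ appearing in the statement and the unweighted $W^{p,k}$ used in Proposition 4. Because $\Gamma \in C^{1}(\Omega, \mathbb{R})$ is linear and $\Omega$ is bounded, the weight factor $e^{-\Gamma}$ (and all of its derivatives, through the gradient potential $\gamma$) is bounded above and below by positive constants on $\overline{\Omega}$. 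Consequently, on this bounded $C^{2}$-domain the norms $\|\cdot\|_{W_{\Gamma}^{2,k}}$ and $\|\cdot\|_{W^{2,k}}$ are equivalent, with equivalence constants $c_1, c_2 > 0$ depending only on $\Gamma$ and $\Omega$.

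Third, given $\varepsilon > 0$, I would set $\varepsilon' = \varepsilon / c_2$ and apply Proposition 4 with $p=2$ to obtain a $Cl_n$-valued function $\Psi = \sum_{A} e_A \psi_A$ which is $C^{\infty}$ up to $\partial \Omega$ and satisfies
\begin{equation*}
\|f - \Psi\|_{W^{2,k}(\Omega \cup \partial \Omega, Cl_n)} < \varepsilon'.
\end{equation*}
Multiplying through by $c_2$ and using the norm equivalence then yields
\begin{equation*}
\|f - \Psi\|_{W_{\Gamma}^{2,k}(\Omega \cup \partial \Omega, Cl_n)} \leq c_2 \|f - \Psi\|_{W^{2,k}(\Omega \cup \partial \Omega, Cl_n)} < \varepsilon,
\end{equation*}
which is the desired conclusion.

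The main obstacle, and really the only non-routine step, is the passage between the weighted and unweighted Sobolev norms: one must check that the $\gamma$-dependent derivatives that appear implicitly in the $W_{\Gamma}^{2,k}$ norm are controlled by ordinary derivatives via bounded multiplication operators. Once that equivalence is established, the rest is simply invoking Proposition 4 componentwise, exactly as in its proof (mollification of each real section $f_A$ on the nested exhaustion $\widetilde{\Omega}_i$, partition of unity, geometric-series summation of $\beta/2^{i+1}$, and Clifford recombination via $\Psi = \sum_A e_A \psi_A$), and no further analytic work on the singular kernel $\Psi^{\Gamma}$ from (\ref{fundsol}) is required.
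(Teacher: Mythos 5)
Your proposal is correct and follows essentially the same route as the paper: both reduce the claim to the main smooth-approximation result (Proposition 4) applied componentwise to the sections $f_A$ of the solution $f\in W^{2,k}(\Omega ,Cl_{n})$ furnished by the boundary integral, and then recombine via $\Psi =\sum_{A}e_{A}\psi _{A}$. Your explicit norm-equivalence step between $W_{\Gamma }^{2,k}$ and $W^{2,k}$ is a point the paper silently elides (its proof simply switches to the unweighted norm), so your version is if anything slightly more careful.
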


\begin{proof}
The analytic solution of the BVP$\left( \ref{bvp}\right) $ is given by a
boundary integral $\left( \ref{solu}\right) $ and this boundary integral
which is also written as $f=F_{\partial \Omega }\left( g\right) =F_{\partial
\Omega }\left( tr_{\partial \Omega }f\right) $ puts the solution in to the
Sobolev space $W^{2,k}\left( \Omega ,Cl_{n}\right) $.

\ \ 

This is because the trace operator as a sharpening operator ( that reduces
smoothness in this case by a $\frac{1}{2}$) has the property : 
\begin{equation*}
tr_{\partial \Omega }:W^{2,k}\left( \Omega ,Cl_{n}\right) \rightarrow W^{2,k-%
\frac{1}{2}}\left( \partial \Omega ,Cl_{n}\right)
\end{equation*}%
and the $\partial -$ integral as a left inverse of the $tr_{\partial \Omega
}-$operator as a mapping where the argument is a $\gamma -$regular function,
is a smoothening operator with the property :

\begin{equation*}
F_{\partial \Omega }=\left( \Psi ^{\Gamma }\underset{\text{convolution}}{%
\underbrace{\ast }}\nu \left( \cdot \right) \right) _{\mid \partial \Omega
}:W^{2,s}\left( \partial \Omega ,Cl_{n}\right) \rightarrow W^{2,s+\frac{1}{2}%
}\left( \Omega ,Cl_{n}\right) 
\end{equation*}

where, $\nu $ is the unit normal vector function defined on the boundary of $%
\Omega $.

\ \ \ 

But in general, the two operators, $\partial -$integral and $tr_{\partial
\Omega }-$ are inverses of each other in terms of preserving regularity, not
as function transformations.

\ 

Therefore, the solution function $f$ which is $Cl_{n}-$valued can be written
as

$\ f=\dsum\limits_{A}e_{A}f_{A}$, with%
\begin{equation*}
f_{A}:=\left( \int_{\partial \Omega }\Psi _{\Gamma }(x-y)\nu (y)g(y)d\Sigma
_{y}\right) _{A}
\end{equation*}%
the $A-$component of $f$.

\ \ \ 

But then as above, there exists a corresponding smooth Clifford valued
function $\psi _{A}$ so that for $\epsilon >0$, we have 
\begin{equation*}
\parallel f_{A}-\psi _{A}\parallel _{W^{2,k}\left( \Omega \cup \partial
\Omega ,%
%TCIMACRO{\U{211d} }%
%BeginExpansion
\mathbb{R}
%EndExpansion
\right) }\leq \frac{\epsilon ^{2}}{2^{2n}}
\end{equation*}

Therefore, by taking $\Psi $ as the Clifford sum of the component functions $%
\psi _{A}$, we have the following inequality:

\begin{eqnarray*}
&\parallel &f-\Psi \parallel _{W^{2,k}\left( \Omega \cup \partial \Omega
,Cl_{n}\right) }=\parallel \dsum\limits_{A}e_{A}\left( f_{A}-\psi
_{A}\right) \parallel _{W^{2,k}\left( \Omega \cup \partial \Omega
,Cl_{n}\right) } \\
&\leq &\dsum\limits_{A}\frac{\epsilon }{2^{n}}<\epsilon
\end{eqnarray*}
\end{proof}

\bigskip\ 

Interestingly enough, the smooth approximation works to BVPs which have
non-vanishing Dirac derivatives over the domain, unlike the minimal family
approximation which we have only for $\gamma -$regular functions with a non
vanishing trace.

We therefore give this result in the following proposition.

\begin{proposition}
Let $\Omega $ be a bounded domain in $%
%TCIMACRO{\U{211d} }%
%BeginExpansion
\mathbb{R}
%EndExpansion
^{n}$ whose boundary is a $C^{2}-$ hyper surface and let $g\in
W^{2,k-1}\left( \Omega ,Cl_{n}\right) ,h\in W^{2,k-\frac{1}{2}}\left(
\partial \Omega ,Cl_{n}\right) ,$ then the:

\begin{equation*}
\mathbf{NHBVP}:\left\{ 
\begin{array}{c}
D_{\gamma }f=g\text{, on }\Omega \\ 
trf=h,\text{ on }\partial \Omega%
\end{array}%
\right.
\end{equation*}

has a solution $f$ which is in the Sobolev space $W^{2,k}\left( \Omega
,Cl_{n}\right) $ given by 
\begin{equation*}
f\left( x\right) =\dint\limits_{\partial \Omega }\Psi ^{\Gamma }\left(
x-y\right) v\left( y\right) h\left( y\right) d\partial \Omega
_{x}+\dint\limits_{\Omega }\Psi ^{\Gamma }\left( x-y\right) g\left( x\right)
d\Omega _{x}
\end{equation*}

and therefore there exists a sequence $\left\{ \varphi _{m}\right\}
_{m=1}^{\infty }\subset C^{\infty }\left( \Omega \cup \partial \Omega
,Cl_{n}\right) $ such that for $\epsilon >0,\exists n_{0}\in 
%TCIMACRO{\U{2115} }%
%BeginExpansion
\mathbb{N}
%EndExpansion
$ $\ni $ 
\begin{equation*}
\Vert \varphi _{k}-\left( \dint\limits_{\partial \Omega }\Psi ^{\Gamma
}\left( x-y\right) v\left( y\right) h\left( y\right) d\partial \Omega
_{x}+\dint\limits_{\Omega }\Psi ^{\Gamma }\left( x-y\right) g\left( x\right)
d\Omega _{x}\right) \Vert _{W^{2,k}\left( \Omega \cup \partial \Omega
,Cl_{n}\right) }<\epsilon
\end{equation*}%
for all $k\geq n_{0}$.
\end{proposition}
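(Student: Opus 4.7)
The plan is to first verify that the given formula
$$f(x)=\int_{\partial\Omega}\Psi^{\Gamma}(x-y)\nu(y)h(y)\,d\partial\Omega_y+\int_{\Omega}\Psi^{\Gamma}(x-y)g(y)\,d\Omega_y=:F_{\partial\Omega}(h)(x)+T_{\Omega}(g)(x)$$
actually solves the \textbf{NHBVP} and lies in $W^{2,k}(\Omega,Cl_n)$, and then to apply Proposition $3$ componentwise to produce the $C^{\infty}$ approximating sequence. The decomposition $f=F_{\partial\Omega}(h)+T_{\Omega}(g)$ is the $\gamma$-analogue of the Borel--Pompeiu formula adapted to $D_{\gamma}$, and it is the natural target for a proof.

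I would first check the interior equation. Since $\Psi^{\Gamma}(x-y)$ is $\gamma$-regular in $x$ for $x\neq y$ (it is the fundamental solution introduced in $(\ref{fundsol})$), differentiation under the integral sign gives $D_{\gamma}F_{\partial\Omega}(h)=0$ on $\Omega$. For the Teodorescu-type volume term $T_{\Omega}(g)$, the fundamental solution property of $\Psi^{\Gamma}$ — which is precisely $D_{\gamma}\Psi^{\Gamma}=\delta_{0}e_{0}$ in the distributional sense — yields $D_{\gamma}T_{\Omega}(g)=g$ on $\Omega$. Adding the two contributions gives $D_{\gamma}f=g$ as required.

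Next I would check the trace condition. Because $g\in W^{2,k-1}(\Omega,Cl_n)$, the Teodorescu transform $T_{\Omega}$ raises regularity by one, so $T_{\Omega}(g)\in W^{2,k}(\Omega,Cl_n)$ and its trace on $\partial\Omega$ lies in $W^{2,k-\frac{1}{2}}(\partial\Omega,Cl_n)$ by the standard trace theorem (valid because $\partial\Omega$ is $C^{2}$). The boundary Cauchy operator $F_{\partial\Omega}$ raises regularity by $\frac{1}{2}$, so $F_{\partial\Omega}(h)\in W^{2,k}(\Omega,Cl_n)$. Writing $h':=h-tr_{\partial\Omega}T_{\Omega}(g)\in W^{2,k-\frac{1}{2}}(\partial\Omega,Cl_n)$ and using the $\gamma$-Plemelj--Sokhotski jump relations (the $\gamma$-analogue of Proposition $2$), one checks that $tr_{\partial\Omega}F_{\partial\Omega}(h')=h'$, and hence $tr_{\partial\Omega}f=h$. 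This also confirms $f\in W^{2,k}(\Omega,Cl_n)$, which is the essential prerequisite for the approximation step.

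Once $f\in W^{2,k}(\Omega,Cl_n)$ is established, the smooth approximation is immediate: Proposition $3$ with $p=2$ produces, for every $\epsilon>0$, a Clifford-valued function $\varphi=\sum_{A}e_{A}\psi_{A}\in C^{\infty}(\Omega\cup\partial\Omega,Cl_{n})$ with $\|f-\varphi\|_{W^{2,k}(\Omega\cup\partial\Omega,Cl_{n})}<\epsilon$; applying this with $\epsilon_{m}=\frac{1}{m}$ yields the required sequence $\{\varphi_{m}\}$. The main obstacle I expect is a careful justification of the $\gamma$-Plemelj jump relations and the mapping properties of $F_{\partial\Omega}$ and $T_{\Omega}$ on fractional Sobolev scales — especially the half-order gain for the boundary Cauchy operator, which is what glues the regularity of $h$ to the required $W^{2,k}$ regularity of $f$ and thereby enables the application of the main approximation proposition.
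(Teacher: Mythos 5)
Your approximation step is exactly the paper's argument: put $f$ in $W^{2,k}(\Omega ,Cl_{n})$ via the mapping properties of the two potentials (the $\Omega$-integral raising regularity by one, the $\partial \Omega$-integral by one half), then apply the main smooth-approximation proposition with $p=2$ to each real-valued component $f_{A}$ with tolerance of order $\epsilon ^{2}/2^{2n}$, and sum over the index sets $A$ to build $\varphi _{m}=\sum_{A}e_{A}\varphi _{A,m}$. The paper's own proof does no more than this; it does not verify that the displayed formula actually solves the \textbf{NHBVP}, so the verification you add is extra material relative to the paper.

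That added verification, however, has an internal inconsistency in the trace step. You introduce $h':=h-tr_{\partial \Omega }T_{\Omega }(g)$ and assert $tr_{\partial \Omega }F_{\partial \Omega }(h')=h'$, concluding $tr_{\partial \Omega }f=h$; but the candidate solution is $F_{\partial \Omega }(h)+T_{\Omega }(g)$, built from $h$ and not from $h'$. If $tr_{\partial \Omega }F_{\partial \Omega }$ really acted as the identity, your own computation would give $tr_{\partial \Omega }f=h+tr_{\partial \Omega }T_{\Omega }(g)\neq h$. In fact the boundary limit of the Cauchy-type operator is the Plemelj--Sokhotski projection $\tfrac{1}{2}\left( I+S_{\Gamma }\right) $, which is the identity only on boundary values of left $\gamma$-regular functions (the Hardy-type subspace), so $tr_{\partial \Omega }F_{\partial \Omega }(h)=h$ requires a compatibility condition on $h$, or else the boundary integral should be taken over $h'$ rather than $h$. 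Since the paper never checks the trace condition either --- it in effect reads the formula off the Borel--Pompeiu identity for a solution assumed to exist --- this does not put you at odds with the paper's proof, but as written the step ``hence $tr_{\partial \Omega }f=h$'' does not follow and should be repaired (apply $F_{\partial \Omega }$ to $h'$, or invoke Borel--Pompeiu directly as the paper does) before the approximation argument, which is otherwise sound, is brought in.
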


\begin{proof}
First, one can see that the $\Omega -$ integral has the mapping property:

\begin{equation*}
\left( \Psi ^{\Gamma }\underset{\text{convolution}}{\underbrace{\ast }}%
\left( \cdot \right) \right) _{\mid \Omega }:W^{2,k}\left( \Omega
,Cl_{n}\right) \rightarrow W^{2,k+1}\left( \Omega ,Cl_{n}\right) 
\end{equation*}%
which is a smoothness augmentation by a one unlike the $\partial -$ integral
which increases by a half. \ \ 

\ \ \ 

Next, let%
\begin{equation*}
f_{A}:=\left( \dint\limits_{\partial \Omega }\Psi ^{\Gamma }\left(
x-y\right) v\left( y\right) h\left( y\right) d\partial \Omega
_{x}+\dint\limits_{\Omega }\Psi ^{\Gamma }\left( x-y\right) g\left( x\right)
d\Omega _{x}\right) _{A}
\end{equation*}%
the $A-$ Clifford \ section of \ $f$.

\ \ 

Then $f_{A}:\Omega \rightarrow 
%TCIMACRO{\U{211d} }%
%BeginExpansion
\mathbb{R}
%EndExpansion
$ is in the Sobolev section $W^{2,k}\left( \Omega ,%
%TCIMACRO{\U{211d} }%
%BeginExpansion
\mathbb{R}
%EndExpansion
\right) $ and therefore, $\exists $ a sequence $\left\{ \varphi
_{A,j}\right\} _{j=1}^{\infty }\subseteq C^{\infty }\left( \Omega \cup
\partial \Omega ,%
%TCIMACRO{\U{211d} }%
%BeginExpansion
\mathbb{R}
%EndExpansion
\right) $ such that for $\epsilon >0$, $\exists n_{A}\in 
%TCIMACRO{\U{2115} }%
%BeginExpansion
\mathbb{N}
%EndExpansion
$ such that for $m_{A}\geq n_{A}$ , where $m_{A}\in 
%TCIMACRO{\U{2115} }%
%BeginExpansion
\mathbb{N}
%EndExpansion
$, we have 
\begin{eqnarray*}
\Vert \varphi _{A,m_{A}}-f_{A}\Vert _{W^{2,k}\left( \Omega \cup \partial
\Omega ,%
%TCIMACRO{\U{211d} }%
%BeginExpansion
\mathbb{R}
%EndExpansion
\right) } &<&\frac{\epsilon ^{2}}{2^{2n}} \\
&&
\end{eqnarray*}%
Then for $n_{0}:=\max \left\{ m_{A}:\text{ }A\text{ is an index set}\right\} 
$ and for $k\geq n_{0}$, \ taking the Clifford valued function given by $%
\varphi _{k}:=\dsum\limits_{A}e_{A}\varphi _{A,k}$ which is $C^{\infty }-$%
over $\Omega \cup \partial \Omega $, we have 
\begin{eqnarray*}
\Vert f-\varphi _{k}\Vert _{W^{2,k}\left( \Omega \cup \partial \Omega
,Cl_{n}\right) } &=&\Vert \dsum\limits_{A}e_{A}\left( f_{A}-\varphi
_{A,k}\right) \Vert _{W^{2,k}\left( \Omega \cup \partial \Omega
,Cl_{n}\right) } \\
&\leq &\dsum\limits_{A}\frac{\epsilon }{2^{n}}<\epsilon
\end{eqnarray*}

that proves the proposition.
\end{proof}

\bigskip\ \ \ \ \ \ 

The next results focus on how far away are solutions of NHBVPs stated in \
proposition $6$, from space of monogenic functions or $\gamma -$regular
functions defined over the domain $\Omega $, if the input functions $g$ and $%
h$ are $C^{\infty }-$ over the respective domains of definition. We first
put Alexander's inequality for our purpose.

\ \ \ 

\begin{proposition}
\bigskip (Alexander)Let $f$ be a Clifford valued $C^{\infty }$- function
defined over a compact domain $\Omega $ in $%
%TCIMACRO{\U{211d} }%
%BeginExpansion
\mathbb{R}
%EndExpansion
^{n+1}$.

Then 
\begin{equation*}
\underset{C\left( \Omega ,Cl_{n}\right) }{dist}(f,M\left( \Omega
,Cl_{n}\right) )\leq \beta \left( \mu \left( \Omega \right) ^{\left( \frac{1%
}{n+1}\right) }\right) \Vert Df\Vert _{\infty }
\end{equation*}

where, $\mu $ is the volume measure in $%
%TCIMACRO{\U{211d} }%
%BeginExpansion
\mathbb{R}
%EndExpansion
^{n+1}$ and $\Vert \cdot \Vert _{\infty }$ is the supremum norm and $M\left(
\Omega ,Cl_{n}\right) $ is the set of Clifford valued functions defined over 
$\Omega $ which are annihilated by the Dirac differential operator $D$.$^{{}}
$
\end{proposition}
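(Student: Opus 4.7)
The plan is to deduce Alexander's inequality from the Borel--Pompeiu (Cauchy--Pompeiu) representation formula, which splits any $C^{1}$ Clifford-valued function into a monogenic piece plus a volume correction controlled by $Df$. Concretely, for $x\in\Omega\subset\mathbb{R}^{n+1}$,
\begin{equation*}
f(x)=\int_{\partial\Omega}\Psi(x-y)\,\nu(y)f(y)\,d\Sigma_{y}+\int_{\Omega}\Psi(x-y)\,Df(y)\,d\Omega_{y},
\end{equation*}
where $\Psi(z)=\overline{z}/(\omega_{n+1}\Vert z\Vert^{n+1})$ is the fundamental solution of $D$ on $\mathbb{R}^{n+1}$. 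The boundary integral, call it $F(x)$, is left-monogenic in $\Omega$ (differentiation under the integral annihilates the kernel), so $F\in M(\Omega,Cl_{n})$ and
\begin{equation*}
\underset{C(\Omega,Cl_{n})}{\mathrm{dist}}(f,M(\Omega,Cl_{n}))\leq \Vert f-F\Vert_{\infty}=\Vert T(Df)\Vert_{\infty},
\end{equation*}
where $T$ is the Teodorescu (volume) transform appearing above.

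Next I would estimate $T(Df)$ pointwise by pulling out the sup norm of $Df$:
\begin{equation*}
|T(Df)(x)|\leq \Vert Df\Vert_{\infty}\int_{\Omega}|\Psi(x-y)|\,d\Omega_{y}=\frac{\Vert Df\Vert_{\infty}}{\omega_{n+1}}\int_{\Omega}\frac{d\Omega_{y}}{\Vert x-y\Vert^{n}}.
\end{equation*}
The task then reduces to bounding $\int_{\Omega}\Vert x-y\Vert^{-n}d\Omega_{y}$ by a constant times $\mu(\Omega)^{1/(n+1)}$, uniformly in $x$.

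For this I would apply the standard symmetric-decreasing-rearrangement argument: the kernel $y\mapsto\Vert x-y\Vert^{-n}$ is radially decreasing around $x$, so among all measurable sets of a prescribed volume the integral is maximized by the ball $B_{R}(x)$ centered at $x$ with $\mu(B_{R})=\mu(\Omega)$. A direct computation in polar coordinates gives
\begin{equation*}
\int_{B_{R}(x)}\frac{d\Omega_{y}}{\Vert x-y\Vert^{n}}=\omega_{n+1}R,\qquad R=\left(\frac{(n+1)\mu(\Omega)}{\omega_{n+1}}\right)^{1/(n+1)},
\end{equation*}
which yields the desired inequality with an explicit constant $\beta=\bigl((n+1)/\omega_{n+1}^{n}\bigr)^{1/(n+1)}$ (or a convenient overestimate of it).

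The main obstacle is this last rearrangement step: one must be a little careful since $x$ need not lie in $\Omega$, but because the integrand is radial about $x$ and the ball $B_{R}(x)$ is a universal maximizer of the layer-cake integral of a radially decreasing function of fixed mass, the inequality holds whether or not $x\in\Omega$. Every other ingredient (Borel--Pompeiu, pulling out $\Vert Df\Vert_{\infty}$, monogenicity of the boundary integral) is routine once the fundamental solution is in hand.
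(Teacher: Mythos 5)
Your argument is sound, but note that the paper itself offers no proof of this proposition at all: it is quoted as a known theorem of Alexander (the Clifford--analysis analogue of Alexander's uniform-approximation estimate) and is then only \emph{applied} in the next proposition. So you have supplied a self-contained derivation where the paper supplies none. Your derivation is the standard and correct one: Borel--Pompeiu splits $f$ into the Cauchy (boundary) integral $F$, which is left-monogenic in $\Omega$ and hence an admissible competitor in $M(\Omega,Cl_{n})$, plus the Teodorescu transform $T(Df)$; pulling out $\Vert Df\Vert_{\infty}$ reduces everything to a uniform bound on $\int_{\Omega}\Vert x-y\Vert^{-n}\,d\Omega_{y}$, and the bathtub/rearrangement principle (valid whether or not $x\in\Omega$, as you observe) gives $\int_{\Omega}\Vert x-y\Vert^{-n}\,d\Omega_{y}\leq\omega_{n+1}R$ with $R=\bigl((n+1)\mu(\Omega)/\omega_{n+1}\bigr)^{1/(n+1)}$. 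Two small points. First, your final constant should be $\beta=\bigl((n+1)/\omega_{n+1}\bigr)^{1/(n+1)}$ rather than $\bigl((n+1)/\omega_{n+1}^{n}\bigr)^{1/(n+1)}$ --- the factor $1/\omega_{n+1}$ in the kernel cancels exactly against the $\omega_{n+1}R$ from the polar-coordinate computation --- but since the proposition leaves $\beta$ unspecified this is harmless. Second, to get the distance in $C(\Omega,Cl_{n})$ up to the boundary you should remark that $T(Df)$ is continuous on $\overline{\Omega}$ (it is in fact H\"older continuous, since the kernel is weakly singular), so $F=f-T(Df)$ extends continuously to $\overline{\Omega}$ and genuinely lies in the competitor class; the pointwise bound on $T(Df)$ then holds uniformly on $\overline{\Omega}$.
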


\bigskip\ \ \ \ 

From the above result of Alexander, we get the following important
inequality on solutions of NHBVPs.

\begin{proposition}
Let $\Omega $ be a compact domain in $%
%TCIMACRO{\U{211d} }%
%BeginExpansion
\mathbb{R}
%EndExpansion
^{n+1}$ and $g$ be a $C^{\infty }-$function over $\Omega $ and $h$ also be $%
C^{\infty }-$over $\partial \Omega $. Then the solution to the NHBVP:

\begin{equation*}
\left\{ 
\begin{array}{c}
D_{\gamma }f=g\text{, on }\Omega \\ 
trf=h\text{, on }\partial \Omega%
\end{array}%
\right.
\end{equation*}

satisfies the inequality: 
\begin{equation*}
\underset{C\left( \Omega ,Cl_{n}\right) }{dist}(f,M_{\gamma }\left( \Omega
,Cl_{n}\right) )\leq \beta \left( \mu \left( \Omega \right) ^{\left( \frac{1%
}{n+1}\right) }\right) \Vert g\Vert _{\infty }
\end{equation*}
where, $M_{\gamma }\left( \Omega ,Cl_{n}\right) $ is the set of Clifford
valued functions defined over $\Omega $ which are annihilated by the Dirac
like Differential operator $D_{\gamma }$.
\end{proposition}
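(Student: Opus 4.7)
The plan is to reduce the estimate to Alexander's inequality (Proposition 7) by means of the standard gauge transformation $u:=e^{-\Gamma}f$, which intertwines $D_{\gamma}$ with the unperturbed Dirac operator $D$. Since $g\in C^{\infty}(\Omega,Cl_{n})$ and $h\in C^{\infty}(\partial\Omega,Cl_{n})$, the integral representation recorded in Proposition~6 together with standard elliptic regularity places the solution $f$ in $C^{\infty}(\overline{\Omega},Cl_{n})$, so Alexander's hypothesis is satisfied by the auxiliary function $u$.

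The heart of the argument is the intertwining identity. Writing $\gamma_{j}=\partial_{j}\Gamma$, a direct calculation using that $e^{-\Gamma}$ is real-valued (hence commutes with each $e_{j}$) yields
\begin{equation*}
\partial_{j}u=e^{-\Gamma}(\partial_{j}f-\gamma_{j}f),\qquad Du=e^{-\Gamma}D_{\gamma}f=e^{-\Gamma}g.
\end{equation*}
Running the same calculation with $+\Gamma$ in place of $-\Gamma$ shows that multiplication by $e^{\Gamma}$ sends $M(\Omega,Cl_{n})$ bijectively onto $M_{\gamma}(\Omega,Cl_{n})$, with inverse multiplication by $e^{-\Gamma}$.

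Applying Proposition~7 to $u$ produces, for any prescribed tolerance, an element $v\in M(\Omega,Cl_{n})$ with $\|u-v\|_{\infty}$ arbitrarily close to $\beta(\mu(\Omega)^{1/(n+1)})\|Du\|_{\infty}$. Setting $w:=e^{\Gamma}v\in M_{\gamma}(\Omega,Cl_{n})$ and using $\|Du\|_{\infty}\leq \|e^{-\Gamma}\|_{\infty,\Omega}\|g\|_{\infty}$, one estimates
\begin{equation*}
\|f-w\|_{\infty}=\|e^{\Gamma}(u-v)\|_{\infty}\leq \|e^{\Gamma}\|_{\infty,\Omega}\|e^{-\Gamma}\|_{\infty,\Omega}\,\beta\!\left(\mu(\Omega)^{1/(n+1)}\right)\|g\|_{\infty},
\end{equation*}
and then passes to the infimum over $w\in M_{\gamma}(\Omega,Cl_{n})$ on the left. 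Because $\Omega$ is compact and $\Gamma\in C^{1}$, the prefactor $\|e^{\Gamma}\|_{\infty,\Omega}\|e^{-\Gamma}\|_{\infty,\Omega}$ is a finite constant depending only on $\Omega$ and $\Gamma$ and can be absorbed into the function $\beta$, delivering the stated inequality.

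The only real obstacle is verifying the intertwining identity $D\circ(e^{-\Gamma}\,\cdot)=(e^{-\Gamma}\,\cdot)\circ D_{\gamma}$ together with the induced bijection $M\leftrightarrow M_{\gamma}$ under multiplication by $e^{\pm\Gamma}$; once these are in place the estimate follows by a mechanical combination of Alexander's theorem with the boundedness of $e^{\pm\Gamma}$ on the compact domain. No further regularity issues arise because the smoothness of $g$ and $h$ already lifts $f$ into $C^{\infty}(\overline{\Omega},Cl_{n})$, which is precisely the class to which Proposition~7 applies.
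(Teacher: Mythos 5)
Your proposal is correct in outline but takes a genuinely different, and in fact more careful, route than the paper. The paper's own proof simply writes $f$ via the Borel--Pompeiu representation, substitutes $trf=h$ and $D_{\gamma}f=g$, and then invokes Alexander's inequality verbatim --- silently treating $D_{\gamma}$ as if it were $D$, $\Vert g\Vert_{\infty}$ as if it were $\Vert Df\Vert_{\infty}$, and even concluding with the distance to $M(\Omega,Cl_{n})$ rather than to $M_{\gamma}(\Omega,Cl_{n})$ as the statement requires. Your gauge transformation $u=e^{-\Gamma}f$ is precisely the device needed to close those gaps: the intertwining identity $Du=e^{-\Gamma}D_{\gamma}f$ is correct (since $e^{-\Gamma}$ is real-valued and commutes with the $e_{j}$), and it does carry $M(\Omega,Cl_{n})$ bijectively onto $M_{\gamma}(\Omega,Cl_{n})$ under multiplication by $e^{\Gamma}$. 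What your argument buys is a legitimate reduction of the $\gamma$-perturbed problem to Alexander's theorem; what it costs is the extra factor $\Vert e^{\Gamma}\Vert_{\infty,\Omega}\Vert e^{-\Gamma}\Vert_{\infty,\Omega}\geq 1$. Your remark that this factor ``can be absorbed into $\beta$'' is the one point to be careful about: if $\beta$ is the universal constant of Alexander's theorem, the inequality you actually prove is the stated one only up to a constant depending on $\Gamma$ and $\Omega$, i.e.\ $\underset{C(\Omega,Cl_{n})}{dist}(f,M_{\gamma})\leq C_{\Gamma}\,\beta\,\mu(\Omega)^{1/(n+1)}\Vert g\Vert_{\infty}$. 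That is a weaker (though morally equivalent) conclusion than the literal statement; since the paper's proof does not establish the literal statement either, your version is arguably the honest form of the result, but you should state the modified constant explicitly rather than suppress it.
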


\begin{proof}
From Borel-Pompeiu \ relation, the solution to the NHBVP given above is
given by the following integral equation: 
\begin{equation*}
f=\dint\limits_{\partial \Omega }\Psi ^{\Gamma }\left( x-y\right) \nu
trfd\partial \Omega +\dint\limits_{\Omega }\Psi ^{\Gamma }\left( x-y\right)
D_{\gamma }fd\Omega
\end{equation*}%
Using the input functions given on the domain and on the boundary, we have
the solution function to be : 
\begin{equation*}
f=\dint\limits_{\partial \Omega }\Psi ^{\Gamma }\left( x-y\right) \nu
hd\partial \Omega +\dint\limits_{\Omega }\Psi ^{\Gamma }\left( x-y\right)
gd\Omega
\end{equation*}%
Then by the inequality of Alexander, we have :

\begin{equation*}
\underset{C\left( \Omega ,Cl_{n}\right) }{dist}\left( \dint\limits_{\partial
\Omega }\Psi ^{\Gamma }\left( x-y\right) \nu hd\partial \Omega
+\dint\limits_{\Omega }\Psi ^{\Gamma }\left( x-y\right) gd\Omega ,M\left(
\Omega ,Cl_{n}\right) \right) \leq \beta \left( \mu \left( \Omega \right)
^{\left( \frac{1}{n+1}\right) }\right) \Vert g\Vert _{\infty }
\end{equation*}
\end{proof}

\ \ \ \ \ 

\begin{remark}
From the above inequality, one can see that if the domain is of measure
zero, then the solution is always approximated by monogenic functions, as
the indicated distance of the solution function from the family of monogenic
functions defined over $\Omega $ is zero for such a set.
\end{remark}

\begin{remark}
Besides, if the input function $g$ has a zero supremum norm then we have
also similar results.

But in a softer note, we see here a very important relation between the
supremum norm of the input function $g$ and how far is the solution function
away from monogenic functions. The thicker the supremum norm of the input
function, the farther away is the solution of the NHBVP from the family of
monogenic functions.
\end{remark}

\

\end{document}